\theoremstyle{plain}
\newcommand{\C}{\mathbb{C}}
\newcommand{\F}{\mathscr{F}}
\newcommand{\sing}{\textsf{Sing}}
\newcommand{\ord}{\textsf{ord}}
\newcommand{\bb}{\text{BB}}
\newcommand{\gsv}{\text{GSV}}
\newcommand{\supp}{\text{Supp}}
\newcommand{\cs}{\text{CS}}
\newcommand{\im}{\text{Im}}
\newcommand{\var}{\text{Var}}
\newcommand{\tr}{\text{Tr}}
\newcommand{\res}{\text{Res}}
\newcommand{\mc}[1]{\mathcal{#1}}
\newcommand{\R}{\mathbb{R}}
\newtheorem{theorem}{Theorem}[section]
\newtheorem{lemma}[theorem]{Lemma}
\newtheorem{proposition}[theorem]{Proposition}
\newtheorem{corollary}[theorem]{Corollary}
\theoremstyle{definition}
\newtheorem{definition}{Definition}[section]
\newtheorem{example}{Example}[section]
\newtheorem{remark}{Remark}[section]
\title[Lehmann-Suwa Residues of holomorphic foliations]{Lehmann-Suwa Residues of codimension one holomorphic foliations and applications}
\author{Arturo Fern\'andez-P\'erez}
\address[Arturo Fern\'andez-P\'erez]{Departamento de Matem\'atica, Universidade Federal de Minas Gerais, UFMG}
\curraddr{Av. Ant\^onio Carlos 6627, 31270-901, Belo Horizonte-MG, Brazil.}
\email{fernandez@ufmg.br}
\author{Jimmy T\'amara}
\address[Jimmy T\'amara]{Instituto de Matem\'atica y Ciencias Afines IMCA}
\curraddr{Jr. Los Bi\'ologos 245, Lima, Per\'u}
\email{jimmy.tamara@imca.edu.pe}
\subjclass[2010]{Primary 32V40 - 32S65}
\keywords{Residues formula, holomorphic foliations, Levi-flat hypersurfaces}
\begin{document}

\begin{abstract}
Let $\F$ be a singular codimension one holomorphic foliation on a compact complex manifold $X$ of dimension at least three such that its singular set has codimension at least two. In this paper, we determine \textit{Lehmann-Suwa residues} of $\F$ as multiples of complex numbers by integration currents along irreducible complex subvarieties of $X$. We then prove a formula that determines the Baum-Bott residue of \textit{simple almost Liouvillian foliations of codimension one}, in terms of Lehmann-Suwa residues, generalizing a result of Marco Brunella. As an application, we give sufficient conditions for the existence of dicritical singularities of a singular real-analytic Levi-flat hypersurface $M\subset X$ tangent to $\F$.
\end{abstract}

\maketitle
%\tableofcontents
\section{Introduction}
\par In 1999, D. Lehmann and T. Suwa \cite{suwa} gave a generalization to the case of arbitrary dimension, of the \textit{variational index}, defined by Khanedani and Suwa \cite{variation} for singular holomorphic foliations on complex surfaces. More precisely, Lehmann and Suwa proved the following result. 
\begin{theorem}[Lehmann-Suwa \cite{suwa}]\label{Lehmann-Suwa_theo}
Let $V$ be a complex subvariety of dimension $m\geq 1$ in a complex manifold $X$ and let $\F$ be a singular holomorphic foliation of dimension $k\geq 1$ on $X$ which leaves $V$ invariant. Denote by $\mathcal{N}_{\F}$ the normal sheaf of $\F$. Let $\varphi$ be a homogeneous symmetric polynomial of degree $d>m-k$. 
\begin{enumerate}
\item For each compact connected component $Z$ of the singular set $\sing(\F|_V)$, there exists a homology class
$$\res_{\varphi}(\F,\mathcal{N}_{\F}|_V;Z)\in H_{2m-2d}(Z;\mathbb{C}),$$
which is determined by the local behavior of $\F$ near $Z$. 
\item If $V$ is compact, 
$$\sum_{Z}(i_{Z})_{*}\res_{\varphi}(\F,\mathcal{N}_{\F}|_{V};Z)=\varphi(\mathcal{N}_{\F})\smallfrown [V]\,\,\,\,\,\,\,\,\text{in}\,\,\,\,\,\,\,H_{2m-2d}(V;\mathbb{C}),$$
where $i_{Z}$ denotes the embedding $Z\hookrightarrow V$ and the sum is taken over all the components $Z$ of $\sing(\F|_V)$. 
\end{enumerate} 
\end{theorem}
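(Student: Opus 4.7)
The plan is to localize the characteristic class $\varphi(\mathcal{N}_{\F}|_V)$ at $\sing(\F|_V)$ by a Chern-Weil construction on a Čech-de Rham complex, with Bott's vanishing theorem supplying the vanishing of the characteristic form away from the singular set. I would cover $V$ by two opens: $U_1 = V \setminus \sing(\F|_V)$, on which $V$ and $\F$ are both smooth and $\F$ induces a regular holomorphic foliation of $V$ of codimension $m-k$ (so $\mathcal{N}_{\F}|_{U_1}$ is locally free); and $U_0 = \bigsqcup_Z U_Z$, a disjoint union of small open tubular neighborhoods of the compact components $Z$ of $\sing(\F|_V)$. On $U_1$, I would take the classical Bott partial connection on $\mathcal{N}_{\F}|_{U_1}$ along $T(\F|_V)$ and extend it to a full smooth connection $\nabla_1$; since $\deg\varphi = d > m-k = \mathrm{codim}_V(\F|_V)$, Bott's vanishing theorem yields $\varphi(\nabla_1) \equiv 0$ on $U_1$. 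On $U_0$, I pick any smooth connection $\nabla_0$ on a locally free model of $\mathcal{N}_{\F}|_{U_0}$.

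Next I would assemble the Čech-de Rham cocycle
$$\bigl(\varphi(\nabla_0),\ 0,\ \varphi(\nabla_0,\nabla_1)\bigr)$$
on the covering $\{U_0,U_1\}$, where $\varphi(\nabla_0,\nabla_1)$ denotes the Bott difference (transgression) form on $U_0\cap U_1$. Its total class in $H^{2d}(V;\C)$ is $\varphi(\mathcal{N}_{\F}|_V)$; because the $U_1$-component vanishes, the cocycle lifts canonically to a class in the relative cohomology $H^{2d}(V, V\setminus\sing(\F|_V);\C)$. Excision decomposes this relative group as $\bigoplus_{Z} H^{2d}(U_Z, U_Z\setminus Z;\C)$, and Alexander duality on each $U_Z$ converts each summand into a homology class $\res_{\varphi}(\F,\mathcal{N}_{\F}|_V;Z) \in H_{2m-2d}(Z;\C)$. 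A standard independence argument (two choices of $\nabla_0$ give cohomologous cocycles, and the Bott partial connection on $U_1$ is intrinsic) shows the class depends only on the germ of $\F$ along $Z$, proving (1).

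For (2), compactness of $V$ allows one to implement the cap-product pairing: the relative class maps under the natural morphism $H^{2d}(V, V\setminus\sing(\F|_V);\C) \to H^{2d}(V;\C)$ to $\varphi(\mathcal{N}_{\F}|_V)$, and Poincaré-Lefschetz duality sends the excision decomposition to $\sum_{Z}(i_Z)_{*}\res_{\varphi}(\F,\mathcal{N}_{\F}|_V;Z)$, producing exactly the claimed equality in $H_{2m-2d}(V;\C)$. The main technical obstacle is the sheaf-theoretic subtlety: $\mathcal{N}_{\F}$ can fail to be locally free precisely on $\sing(\F)\cap V \subset \sing(\F|_V)$, so Bott's vanishing and the Chern-Weil construction must be formulated for coherent sheaves (via Atiyah classes or a locally free resolution near each $Z$), and one must check that the residue homology class is independent of this resolution. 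This is the heart of Lehmann-Suwa's refinement of Baum-Bott, and the remaining verifications reduce to standard bookkeeping once the sheaf-theoretic framework is in place.
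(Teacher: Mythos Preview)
The paper does not give its own proof of this theorem; Theorem~\ref{Lehmann-Suwa_theo} is quoted verbatim from Lehmann--Suwa \cite{suwa} as background, so there is no in-paper argument to compare your proposal against at this level of generality. Your sketch is essentially the standard localization scheme one finds in \cite{suwa} and \cite{suwa_livro}: Bott vanishing on the regular part, \v{C}ech--de~Rham cocycle with a transgression term, and Alexander/Poincar\'e--Lefschetz duality to produce the residue class. As an outline it is correct; the only point you underplay is that $V$ is allowed to be \emph{singular}, so the duality statements in both (1) and (2) cannot be the smooth-manifold versions and require the integration-current/\v{C}ech--de~Rham framework for singular varieties developed in \cite{suwa_livro}. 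You flag the coherent-sheaf issue for $\mathcal{N}_\F$ but not this one.

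What the present paper \emph{does} prove is the special case $\varphi=c_1$, $\F$ of codimension one, $V$ a reduced hypersurface (Theorem~\ref{suma_variational}), and there the route is genuinely different from yours. Instead of the abstract \v{C}ech--de~Rham machinery, the authors work directly with a Godbillon--Vey style $(1,0)$-form $\beta$ satisfying $d\omega=\beta\wedge\omega$, regularize it across $\sing_2(\F,V)$, and then pair the resulting closed $2$-form (a de~Rham representative of $c_1(N_\F)$) with test forms via an explicit Stokes computation on shrinking tubular neighborhoods. The payoff is an \emph{explicit} identification $\res_{c_1}(\F,N_\F|_V;Z)=\var(\F,V,Z)\,[Z]$, i.e.\ the residue is a concrete complex number (an integral of $\beta$ over a transverse loop) times the integration current of $Z$. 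Your abstract approach yields existence of the residue class but not this formula without additional work; conversely, the paper's hands-on method does not generalize beyond $c_1$ and codimension one.
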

\par When $\varphi=c_1$, the expression $\res_{\varphi}(\F,\mathcal{N}_{\F}|_{V};Z)$ is called the \textit{variation of $\F$ with respect to $V$ at $Z$}. In general, the computation and determination of these residues is a difficult problem and few results are known. For example, if the foliation $\F$ is singular at $p\in\C^2$ and $V$ is a reduced complex curve through  $p$ invariant by $\F$. Then the variation of $\F$ relative to $V$ at $p$ is given by
$$\res_{c_1}(\F,\mathcal{N}_{\F}|_V;p)=\var(\F,V,p) [p],$$ where $\var(\F,V,p)$ is the \textit{variational index} 
defined by Khanedani and Suwa in \cite{variation}. When $\F$ is a one-dimensional holomorphic foliation on a complex manifold, that is, locally defined by holomorphic vector fields, there is an explicit formula in terms of Grothendieck residues for $\res_{\varphi}(\F,\mathcal{N}_{\F}|_{V};p)$, see for instance \cite{suwa} and \cite{suwa_livro}.   
\par This paper aims to study of residues of codimension one holomorphic foliations on complex manifolds of dimension at least three. First, we will restrict our attention to \textit{Lehmann-Suwa residues} (or \textit{variations}) of a codimension one holomorphic foliation $\F$ on a compact complex manifold $X$ of dimension at least three. In Section \ref{variational_section}, it is shown that Lehmann-Suwa residues localized at codimension two irreducible components of the singular set of 
$\F$ can be determined as multiples of complex numbers by integration currents along of these irreducible components. 
\par In \cite{cs_lins}, Lins Neto introduced the Camacho-Sad index \cite{CS} for a codimension one holomorphic foliation concerning a \textit{codimension one complex submanifold}, and Gmira \cite{gmira} obtained a generalization of some results due to  Lins Neto \cite{cs_lins}. Recently, Corr\^ea and Machado \cite{machado} defined the \textit{GSV-index} for holomorphic Pfaff systems on complex manifolds generalizing the \textit{GSV-index} of G\'omez-Mont--Seade--Verjovsky \cite{GSV}. In Section \ref{gsv}, 
combining the Corr\^ea-Machado index with the Lehmann-Suwa residues, we recover the Camacho-Sad index for a codimension one holomorphic foliation $\F$ with respect to a codimension one complex subvariety $V$ (possibly with singularities).  
\par In \cite{perrone}, Brunella and Perrone determine the Baum-Bott residue \cite{baum} of a codimension one holomorphic foliation concerning a singular component of codimension two via integration over a 3-sphere of a certain  3-form (see for instance Section \ref{section_baum}). In general, 
the determination of Baum-Bott residues (in terms of the Grothendieck residues) of singular holomorphic foliations of arbitrary codimension have been obtained by Corr\^ea and Louren\c{c}o \cite{correa}. In Section \ref{liouvillian}, we will prove (see Theorem \ref{almost_foliation}) that the Baum-Bott and Lehmann-Suwa residues are related when the codimension one foliation $\F$ is a \textit{simple almost Liouvillian foliation} (see Definition \ref{def_almost}). 
\par In the last part of the paper, we apply our residual formulas to prove, under certain conditions, the existence of \textit{dicritical singularities} of a real-analytic Levi-flat hypersurface tangent to a codimension one holomorphic foliation on a compact complex manifold of complex dimension at least three.
\par It is important pointing out that a general construction of residue theorems for holomorphic foliations and Pfaff systems of any dimension can be found in \cite{abate}, \cite{suwa_livro} and \cite{sancho}. In the special case of singular codimension one holomorphic distributions, we refer the reader to \cite{izawa} and the references given there.\\

\noindent {\bf Acknowledgments.} 
The authors wish to express his gratitude to Maur\'icio Corr\^ea and Miguel Rodr\'iguez Pe\~na for several helpful comments concerning to work. The authors also gratefully acknowledge the anonymous referee for giving many suggestions that helped to improve the presentation of the paper. The first author was partially supported by CNPq Brazil grant number 427388/2016-3 and PRONEX/FAPERJ and the second author was partially supported by Fondecyt-Per\'u CG 217-2014.  
\section{Holomorphic foliations}
\par Let $X$ be a complex manifold and $TX$ the holomorphic tangent bundle of $M$. Let 
$\Theta_X=\mathcal{O}(TM)$ be the sheaf of holomorphic vector fields on $X$. A \textit{singular holomorphic foliation} $\F$ of dimension $r$ on $X$ is determined by a coherent subsheaf $\Theta_{\F}\subset\Theta_X$ of rank $r$, which is involutive (or integrable), i.e., such that 
$$\left[\Theta_{\F,p},\Theta_{\F,p}\right]\subset \Theta_{\F,p}\,\,\,\,\,\,\,\,\,\text{for all}\,\,\,p\in X.$$
We set $\mathcal{N}_{\F}=\Theta_{X}/\Theta_{\F}$ and define $S(\F)$ by 
$$S(\F)=\sing(\mathcal{N}_{\F}).$$ Note that $S(\F)$ is an analytic subset and, from \cite{suwa}, $S(\F)$ is describe as follows: let $U$ be a sufficiently small coordinate neighborhood with coordinates $(z_1,\ldots,z_n)$ and let $v_1,\ldots,v_r$ be generators of $\F$ on $U$. We write $v_i=\displaystyle\sum^{n}_{j=1}f_{ij}(z)\frac{\partial}{\partial{z_{j}}}$. Then 
$$S(\F)\cap U=\{z\in U: rank\left(f_{ij}(z)\right)<r\}.$$
Furthermore, the foliation $\F$ induce an exact sequence 
$$0\longrightarrow\Theta_{\F}\longrightarrow\Theta_{X}\longrightarrow\mathcal{N}_{\F}\longrightarrow 0.$$
\par In this paper, we study foliations of \textit{codimension one} in $X$, i.e., foliations of dimension $\dim(X)-1$. As is common, codimension one foliations can be described dually utilizing differential 1-forms: a \textit{codimension one singular holomorphic foliation $\F$} on $X$
is determined by a saturated locally free subsheaf
$$N^{*}_{\F}\subset\Omega^{1}_X$$ of rank one,
which satisfies the \textit{Frobenius integrability condition}. Locally, $N^{*}_{\F}$ is generated by holomorphic 1-forms $\omega_{k}\in\Omega^1_X(U_{k})$, where $\{U_{k}\}_{k\in I}$ is an open covering of $X$, such that $$\omega_{k}\wedge d\omega_{k}=0$$
and $$\omega_k=g_{k\ell}\omega_{\ell}\,\,\,\,\,\,\,\,\,\,\text{on}\,\,\,U_{k}\cap U_{\ell}.$$
The functions $g_{k\ell}$ are nowhere vanishing, and the multiplicative cocycle $\{g_{k\ell}\}$ defines a line bundle $N_{\F}$, called  the \textit{normal bundle} of $\F$. The \textit{singular set} $\sing(\F)$ of $\F$ is the analytic subset of $X$ defined by 
$$\sing(\F)\cap U_{k}=\text{zeros of}\,\,\, \omega_k,\,\,\,\,\,\,\,\,\,\,\,\,\,\forall\,\,k\in I.$$
The saturated condition means that the zero set of every $\omega_k$ has codimension at least two. Therefore, by definition, $\sing(\F)$ has codimension at least two. We will denote $\sing_2(\F)$ the union of all irreducible components of $\sing(\F)$ of codimension two. 
\par Throughout this paper, we will always work with $N_\F$ and $\sing(\F)$. The relation between $S(\F)$ and $\sing(\F)$ can be found in \cite{Junya} and \cite{suwa_livro} as well. Moreover, we will  assume by hypotheses that $S(\F)=\sing(\F)$ and $\sing_2(\F)\neq \emptyset$. 
\par We remark that in general the characteristic classes of $\mathcal{N}_\F$ (as in Theorem \ref{Lehmann-Suwa_theo}) and $N_{\F}$ are not the same. However, $c_1(\mathcal{N}_{\F})$ and $c_1(N_\F)$ are equal in the $K$-group $K(X)$. In fact, we have the exact sequence 
$$0\longrightarrow N^{*}_{\F}\longrightarrow\Omega^1_{X}\longrightarrow\Omega^1_X/ N^{*}_{\F}\longrightarrow 0.$$ Taking the duals in the last sequence, we obtain an exact sequence
$$0\longrightarrow(\Omega^1_{X}/N^{*}_{\F})^{*}\longrightarrow\Theta_{X}\longrightarrow N_{\F}\longrightarrow \mathcal{E}xt^1(\Omega^1_{X}/N^{*}_{\F},\mathcal{O}_X)\longrightarrow 0.$$
We have $(\Omega^1_{X}/N^{*}_{\F})^{*}=\Theta_\F$ and an exact sequence
$$0\longrightarrow \mathcal{N}_\F\longrightarrow N_{\F}\longrightarrow \mathcal{E}xt^1(\Omega^1_{X}/N^{*}_{\F},\mathcal{O}_X)\longrightarrow 0.$$
The characteristic classes of a coherent sheaf are defined by taking a resolution of the sheaf by vector bundles and regarding it as an element in the $K$-group $K(X)$. Since $c_1$ is additive on $K(X)$, we have
$$c_1(\mathcal{N}_\F)=c_1(\Theta_X)-c_1(\Theta_\F)=-c_1(\Omega^1_X)+c_1(\Omega^1_X)-c_1(N^{*}_\F)=c_1(N_\F).$$
\section{Lehmann-Suwa formula}\label{variational_section}
 Let $\F$ be a codimension one singular holomorphic foliation on a compact complex manifold $X$ of dimension at least three and let $V\subset X$ be a complex hypervariety invariant by $\F$. Here, \textit{complex hypervariety} means codimension one complex subvariety and \textit{invariant} means that if a point of $V$ belongs to the regular part of $\F$, then the whole leaf through this point is included in $V$.  We shall assume furthermore that $V$ is \textit{reduced}, that is, the divisor $V$ does not contain multiple irreducible components.
\par Let us denote by $\sing(V)$ the singular set of $V$ and set $$\sing_2(\F,V)=\sing(V)\cup(\sing_2(\F)\cap V).$$ Let $Z$ be an irreducible component of $\sing_2(\F,V)$ such that $Z$ has pure codimension two. Take a generic point $p\in Z$, that is, a point where $Z$ is smooth and disjoint from the other singular components. %Assume that the germ of $V$ at $p$ is irreducible.  
Take $B_p\subset X$ a small ball centered at $p$ such that $Z\cap B_p$ is the unique irreducible component of $\sing_2(\F,V)\cap B_p$ and suppose that $\omega\in\Omega^{1}_X(B_p)$ represents $\F$ in $B_p$. Working with smooth sections of $N^{*}_{\F}$, instead of holomorphic ones, the corresponding cohomology group is trivial, and so we can certainly  find a smooth (1,0)-form $\beta\in A^{1,0}(B^{*}_p)$ such that 
\begin{equation}\label{godbillon}
d\omega=\beta\wedge \omega,
\end{equation}
 where  $B^{*}_p=B_p\setminus (\sing_2(\F,V)\cap B_p)$. Since $p$ is a generic point of $Z$ and the codimension of $Z$ with respect to $V$ is one, we may take a one-dimensional small transverse section $\sum_{p}$ to $Z$ at $p$ such that $\sum_p\subset V$. Then we define 
\begin{equation}\label{index_vari}
\var(\F,V,Z):=\frac{1}{2\pi i}\int_{\Gamma}\beta,
\end{equation}
 where $\Gamma$ is a generator of $H_1(\sum_{p}\setminus\{p\},\mathbb{Z})$. We call this complex number the \textit{Variational index of $\F$ concerning $V$ along $Z$}. By a connectedness argument, it does not depend on the choice of the generic point $p\in Z$. It is the natural extension of the variational index of Khanedani-Suwa \cite{variation}. 
\par The following result is a particular case of Theorem \ref{Lehmann-Suwa_theo}, the novelty will be to obtain a proof using the Variational index given in (\ref{index_vari}).
\begin{theorem}\label{suma_variational}
Let $\F$ be a codimension one holomorphic foliation on a compact complex manifold $X$ of dimension at least three and let $V\subset X$ be a reduced complex hypervariety invariant by $\F$ such that $\sing_2(\F,V)\neq\emptyset$. 
Then
$$\sum_{Z}\var(\F,V,Z)[Z]=c_1(N_{\F}|_V)\smallfrown [V],$$
where the sum is done over all irreducible components $Z$ of $\sing_2(\F,V)$ and $[Z]$ denotes the integration current associated to $Z$.
\end{theorem}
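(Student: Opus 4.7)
The plan is to recognize the Godbillon forms $\beta_\alpha$ of \eqref{godbillon}, restricted to $V$, as local connection $1$-forms for the line bundle $N_\F|_V$ on the smooth regular part of $V$, and then to localize $c_1(N_\F|_V)\smallfrown[V]$ at the codimension-two components $Z\subset\sing_2(\F,V)$ by a Chern--Weil/Stokes argument. The variational indices \eqref{index_vari} will then appear naturally as the boundary residues.

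\textbf{Step 1 (Connection interpretation).} Cover $X$ by small open sets $U_\alpha$ carrying local generators $\omega_\alpha$ of $N_\F^*$ and smooth $(1,0)$-forms $\beta_\alpha$ satisfying \eqref{godbillon}, together with the transition cocycle $\omega_\alpha=g_{\alpha\beta}\omega_\beta$ of $N_\F$. Differentiating the latter identity and substituting \eqref{godbillon} yields
\[
(\beta_\alpha-\beta_\beta-d\log g_{\alpha\beta})\wedge\omega_\beta=0 .
\]
Because $V$ is $\F$-invariant, $\omega_\beta|_V$ vanishes identically on the smooth regular part of $V$, so on $V^{*}:=V\setminus\sing_2(\F,V)$ we obtain $\beta_\alpha|_V-\beta_\beta|_V=d\log g_{\alpha\beta}|_V$; hence (up to the standard $2\pi i$) the family $\{-\beta_\alpha|_V\}$ is a system of local connection $1$-forms for $N_\F|_{V^{*}}$. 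A partition of unity produces a global smooth connection $\nabla^\F$ on $N_\F|_{V^{*}}$. A further differentiation of \eqref{godbillon} gives $d\beta_\alpha\wedge\omega_\alpha=0$; restricting to $V^{*}$ shows $d\beta_\alpha|_V\equiv 0$, so $\nabla^\F$ is in fact flat on $V^{*}$.

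\textbf{Step 2 (Stokes localization and transverse residue).} Choose a globally smooth connection $\nabla^0$ on $N_\F|_V$ with curvature $\Theta_0$, so that $-\Theta_0/(2\pi i)$ is a Chern--Weil representative of $c_1(N_\F|_V)$ on all of $V$. The difference $\nabla^0-\nabla^\F$ is a globally defined smooth $1$-form $\eta$ on $V^{*}$ with $\Theta_0|_{V^{*}}=d\eta$. Pair both sides of the theorem against a closed smooth test form $\phi$ on $V$ of degree $2\dim V-2$, excise tubular $\epsilon$-neighborhoods $T_\epsilon(Z)$ of each component $Z$, and apply Stokes on $V\setminus\bigcup_Z T_\epsilon(Z)$ to obtain
\[
\int_V c_1(N_\F|_V)\wedge\phi=-\frac{1}{2\pi i}\lim_{\epsilon\to 0}\sum_Z\int_{\partial T_\epsilon(Z)}\eta\wedge\phi .
\]
Near a generic point $p\in Z$ one has a product neighborhood $Z_{\mathrm{reg}}\times\Sigma_p$, with $\Sigma_p$ the transversal appearing in \eqref{index_vari}, so $\partial T_\epsilon(Z)$ fibers over $Z_{\mathrm{reg}}$ with fiber the generator $\Gamma\subset\Sigma_p\setminus\{p\}$. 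Fubini then rewrites each boundary term as
\[
\Bigl(\int_\Gamma\beta\Bigr)\int_Z\phi=2\pi i\cdot\var(\F,V,Z)\int_Z\phi ,
\]
the independence of $p$ used in the definition of $\var(\F,V,Z)$ justifying this parametric version along $Z$. Summing over $Z$ and cancelling the $\pm 2\pi i$ factors gives the pairing of $\sum_Z\var(\F,V,Z)[Z]$ with $\phi$, and hence the claimed equality of homology classes (equivalently, of currents on $V$).

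\textbf{Main obstacle.} The principal technical difficulty lies in the behaviour at the non-generic strata of $\sing_2(\F,V)$: self-intersections of its irreducible components and the singular locus of $V$ itself. These loci have complex codimension at least two in $V$, and one must show that their contribution to the Stokes computation vanishes as $\epsilon\to 0$; this is a dimension count combined with a controlled growth estimate on $\eta$ near those strata. A closely related point is to verify, through a careful partition-of-unity/\v{C}ech--de Rham bookkeeping, that the patched $1$-form $\eta$ is well-defined modulo exact forms on $V^{*}$, so that all boundary integrals are unambiguous and independent of the local choices of $\omega_\alpha$.
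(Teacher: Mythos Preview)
Your argument is correct and rests on the same Chern--Weil/Stokes localization idea as the paper, but the packaging is genuinely different. The paper works on the ambient manifold $X$: it takes Godbillon forms $\gamma_k$ on $U_k\setminus\sing_2(\F,V)$, \emph{regularizes} them to smooth forms $\tilde\gamma_k$ on all of $U_k$, and then trivializes the resulting cocycle $\zeta_{k\ell}=d\log g_{k\ell}-\tilde\gamma_k+\tilde\gamma_\ell$ (which vanishes along $\F$ outside a tube around $\sing_2(\F,V)$) to obtain corrected forms $\hat\gamma_k$ satisfying $d\log g_{k\ell}=\hat\gamma_k-\hat\gamma_\ell$ everywhere. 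The global closed $2$-form $\Omega=\frac{1}{2\pi i}d\hat\gamma_k$ then represents $c_1(N_\F)$ on $X$ and, by construction, its restriction to $V$ is supported in the tube; the Stokes computation is carried out directly with $\Omega|_V$. You bypass the regularization entirely by observing that on $V^*$ the unmodified Godbillon forms already satisfy the connection transformation law, since the discrepancy $\beta_\alpha-\beta_\beta-d\log g_{\alpha\beta}$ is a multiple of $\omega_\beta$ and hence pulls back to zero on the invariant hypersurface; you then compare this flat connection with an arbitrary smooth one. Your route is a little more economical (no cocycle trivialization on $X$), while the paper's route produces a representative of $c_1(N_\F)$ with controlled support, which can be useful in other contexts. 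Both arguments converge on the same boundary integral $\frac{1}{2\pi i}\int_\Gamma\beta$.

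One small correction: in your Fubini step the integrand on $\partial T_\epsilon(Z)$ is $\eta$, not $\beta$; since $\eta=\beta_\alpha|_V+(\text{smooth across }Z)$, the extra term contributes $O(\epsilon)$ to the fiber integral and disappears in the limit, so the conclusion is unaffected.
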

\begin{proof}
We cover $X$ by open subsets $U_{k}$ where the foliation $\F$ is defined by integrable holomorphic 1-forms $\omega_{k}$, with $\omega_{k}=g_{k\ell}\omega_{\ell}$, where $g_{k\ell}\in\mathcal{O}^{*}(U_{k}\cap U_{\ell})$ whenever $U_{k}\cap U_{\ell}\neq\emptyset$. 
Assume that $V\cap U_{k}=\{f_{k}=0\}$, where $f_k\in\mathcal{O}(U_k)$.  On $U_{k}\cap U_{\ell}$, we have $f_{k}=\varphi_{k\ell}f_{\ell}$ with $\varphi_{k\ell}\in\mathcal{O}^{*}(U_{k}\cap U_{\ell})$ and the cocycle $\{\varphi_{k\ell}\}$ defines the line bundle $[V]$ on $X$. 
\par We may find smooth $(1,0)$-forms $\gamma_{k}$ on $U^{*}_{k}=U_{k}\setminus(\sing_2(\F,V)\cap U_{k})$ such that $d\omega_{k}=\gamma_{k}\wedge\omega_{k}$. We fix a small neighborhood $U$ of $\sing_2(\F,V)$ and we regularize each $\gamma_{k}$ on $U$, that is, we choose a smooth $(1,0)$-form $\tilde{\gamma}_{k}$ on $U_{k}$ coinciding with $\gamma_{k}$ outside of $U_{k}\cap U$. Then the smooth $(1,0)$-forms  $$\zeta_{k\ell}=\frac{dg_{k\ell}}{g_{k\ell}}-\tilde{\gamma}_{k}+\tilde{\gamma}_{\ell}$$ vanish on $\F$ outside $U$. This cocycle can be trivialized 
$$\zeta_{k\ell}=\zeta_{k}-\zeta_{\ell},$$ where $\zeta_{k}$ is a smooth $(1,0)$-form on $U_{k}$ vanishing on $\F$ outside of $U_{k}\cap U$. Hence, after setting $\hat{\gamma}_{k}=\tilde{\gamma}_{k}+\zeta_{k}$, we get
\begin{equation}
\frac{dg_{k\ell}}{g_{k\ell}}=\hat{\gamma}_{k}-\hat{\gamma}_{\ell}.
\end{equation}
Note that we still have $d\omega_{k}=\hat{\gamma}_{k}\wedge\omega_{k}$ outside of $U_{k}\cap U$. The globally defined closed 2-form (of mixed type $(2,0)+(1,1)$) 
$$\Omega=\frac{1}{2\pi i} d\hat{\gamma}_{k}$$
represents, in the De Rham cohomology, the first Chern class of $N_\F$. Moreover, outside $U$, $\Omega$ vanishes when restricted to leaves of $\F$ ($\Omega\wedge\omega_k=0$), and in particular, when restricted to $V$, except on small neighborhoods of $\sing_2(\F,V)$ in $V$. This means that $$\supp(\Omega|_{V})\subset \overline{U}.$$
\par Let $\psi$ be a closed smooth $(2n-4)$-form on $V$ and let $\langle\,,\,\rangle$ be a hermitian metric on $[V]$. Let $\sigma$ be the global section of $[V]$ defined by $\sigma|_{V_{k}}=f_{k}$, where $V_{k}:=V\cap U_{k}=\{f_k=0\}$. Set $S=\sing_2(\F,V)$. We consider the tubular neighborhood of $S$ in $V$ for some small number $\epsilon>0$ as follows
$$T_S(\epsilon)=\{p\in V:||\sigma(p) ||_{p}\leq \epsilon\}.$$
Note that $\sigma(p)=0$ if, and only if, $p\in S$, moreover $$\partial{T}_{S}(\epsilon)=\{p\in V: ||\sigma(p)||_p=\epsilon\}.$$ 
Assume that $S=\displaystyle\bigcup^{m}_{j=1}Z_j$. For each $Z_j$ choose a small neighborhood $V_j$ in $U$ such that $Z_j\subset V_j$, $\overline{V_j}\subset U$ and $\supp(\Omega|_V)=\displaystyle\bigcup^{m}_{j=1}V_j$.  Denoting $T_{Z_{j}}(\epsilon)=T_{S}(\epsilon)\cap \overline{V_j}$,  we have $Z_j\subset T_{Z_{j}}(\epsilon)$. Therefore, 
\begin{eqnarray*}
\int_V \Omega\wedge \psi& = & \sum^m_{j=1}\int_{V_j}\Omega\wedge\psi\\
& = & \frac{1}{2\pi i}\sum^{m}_{j=1}\int_{V_{j}}d\hat{\gamma}_{j}\wedge\psi \\
& = &  \frac{1}{2\pi i}\sum^{m}_{j=1}\left[\int_{T_{Z_{j}}(\epsilon)}d\hat{\gamma}_{j}\wedge\psi+\int_{V_j-T_{Z_{j}}(\epsilon)}d\hat{\gamma}_{j}\wedge\psi\right].
\end{eqnarray*}
Since $\displaystyle\lim_{\epsilon\to 0}\int_{V_j-T_{Z_{j}}(\epsilon)}d\hat{\gamma}_{j}\wedge\psi=0$,
we get
\begin{eqnarray}\label{integral1}
\int_V \Omega\wedge \psi& = & \frac{1}{2\pi i}\sum^{m}_{j=1}\lim_{\epsilon\to 0}\int_{T_{Z_{j}}(\epsilon)}d\hat{\gamma}_{j}\wedge\psi \nonumber\\
& = &  \frac{1}{2\pi i}\sum^{m}_{j=1}\lim_{\epsilon\to 0}\int_{T_{Z_{j}}(\epsilon)}d(\hat{\gamma}_{j}\wedge\psi)\nonumber\\
& = & \frac{1}{2\pi i}\sum^{m}_{j=1}\lim_{\epsilon\to 0}\int_{\partial{T}_{Z_{j}}(\epsilon)}\hat{\gamma}_{j}\wedge\psi.
\end{eqnarray}
Now, take a smooth point $p\in Z_{j}-\displaystyle\bigcup_{\ell\neq j}Z_{\ell}$, then there exists a neighborhood $W_p\subset V_j$ of $p$ and a coordinate system $(z_1,z_2,\ldots,z_{n-1})$ centered at $p$ such that $Z_j\cap W_p=\{z_1=0\}$
and $\partial{T}_{Z_j}(\epsilon)\cap W_p=\{|z_1|=\epsilon, z'\in\Delta_{\epsilon}\}$, where $z'=(z_2,\ldots,z_{n-1})$ and $\Delta_{\epsilon}=\{z'\in\mathbb{C}^{n-2}:|z'|\leq \epsilon\}$. Note that
\begin{eqnarray*}
\lim_{\epsilon\to 0} \frac{1}{2\pi i}\int_{\partial{T}_{Z_{j}}(\epsilon)\cap W_p}\hat{\gamma}_{j}\wedge\psi
& = & \lim_{\epsilon\to 0} \frac{1}{2\pi i}\int_{\Delta_{\epsilon}}\int_{|z_1|=\epsilon}\hat{\gamma}_{j}\wedge\psi  \\
& = & \lim_{\epsilon\to 0} \int_{\Delta_{\epsilon}}\left [\frac{1}{2\pi i}\int_{|z_1|=\epsilon}\hat{\gamma}_{j}\right ]\psi\\
& = & \lim_{\epsilon\to 0} \int _{\Delta_{\epsilon}}\var(\F,V,Z_j)\psi\\
& = & \var(\F,V,Z_j)\lim_{\epsilon\to 0} \int _{\Delta_{\epsilon}}\psi\\
&=&  \var(\F,V,Z_j)\int _{Z_j\cap W_p}\psi.
\end{eqnarray*}
Therefore, 
\begin{eqnarray}\label{integral2}
\lim_{\epsilon\to 0} \frac{1}{2\pi i}\int_{\partial{T}_{Z_{j}}(\epsilon)}\hat{\gamma}_{j}\wedge\psi & = & \var(\F,V,Z_j)\int _{Z_j}\psi\nonumber\\
& = & \var(\F,V,Z_j)[Z_j](\psi).
\end{eqnarray}
Hence, from (\ref{integral1}) and (\ref{integral2}) we get
$$\int_{V}\Omega\wedge\psi=\sum^{m}_{j=1}\var(\F,V,Z_j)[Z_j](\psi),$$
for any closed smooth $(2n-4)$-form $\psi$ on $V$.
Using Poincar\'e duality and the fact that $\Omega|_{V}$ represents, in the De Rham cohomology, the Chern class of $N_{\F}|_V$, we obtain
\begin{equation*}
\sum^{m}_{j=1}\var(\F,V,Z_j)[Z_j]=c_1(N_{\F}|_V)\smallfrown [V].
 \end{equation*}
 \end{proof}
\begin{remark}
The proof above gives more, namely, the Lehmann-Suwa residues are determined as follows:
$$\res_{c_1}(\F,N_{\F}|_V;Z)=\var(\F,V,Z)[Z].$$
\end{remark}

\section{GSV and Camacho-Sad indices for codimension one holomorphic foliations}\label{gsv}
\subsection{Saito's decomposition} 
The following lemma can be found in Saito \cite[Section 1]{saito}. When $\F$ is a germ of holomorphic foliation at $0\in\C^2$, we refer to reader to \cite{lins}, \cite{suwa_livro}.
\begin{lemma}[Saito \cite{saito}]\label{saito_lemma}
Let $\F$ be a germ of codimension one singular holomorphic foliation at $0\in\C^n$, $n\geq 2$, defined by a germ of an integrable holomorphic 1-form $\omega$. Suppose $V=\{f=0\}$ is a germ at $0\in\C^n$ of reduced complex hypervariety invariant by $\F$. Then
 there exist germs of holomorphic functions $g$, $h$ and a germ of holomorphic 1-form $\eta$ at $0\in\C^n$ such that 
\begin{equation}\label{eq_saito}
g\omega=hdf+f\eta,
\end{equation}
where $h$ and $f$ have no common factors. Moreover, $g$ and $f$ also have no common factors. 
\end{lemma}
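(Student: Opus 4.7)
The key geometric input is the invariance of $V=\{f=0\}$ by $\F$: at any point $q\in V_{reg}\setminus\sing(\F)$, the tangent space of $\F$ is contained in $T_qV$, so $\ker\omega_q\subset\ker df_q$, and therefore $(df\wedge\omega)|_{V_{reg}\setminus\sing(\F)}\equiv 0$. Since $V_{reg}\setminus\sing(\F)$ is dense in $V$ and $f$ is reduced, this pointwise vanishing upgrades to an algebraic divisibility statement in $\mathcal{O}_n:=\mathcal{O}_{\C^n,0}$, namely
\[
df\wedge\omega \;=\; f\cdot\Theta
\]
for some germ of holomorphic $2$-form $\Theta$ at the origin. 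This is the only place where invariance and reducedness of $V$ are used; the rest of the argument is a divisibility manipulation in the UFD $\mathcal{O}_n$.

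Next I would choose coordinates so that a single partial of $f$ and the corresponding coefficient of $\omega$ are coprime to $f$. Writing $\omega=\sum_i a_i\,dz_i$ and $df=\sum_i f_i\,dz_i$, and recalling that $\omega$ is saturated, so $\gcd(a_1,\dots,a_n)=1$, I would show that for each irreducible factor $p$ of $f$ one has $p\nmid f_i$ for some $i$. Indeed, writing $f=pf'$ with $\gcd(p,f')=1$, the condition $p\mid f_i$ for all $i$ would force $p\mid p_i$ for all $i$, i.e.\ $dp\equiv 0\pmod p$, contradicting generic smoothness of the reduced hypersurface $V(p)$ in characteristic zero. By the same idea, saturation forces $p\nmid a_i$ for some $i$. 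Hence a generic linear change of coordinates simultaneously achieves $\gcd(f_1,f)=1$ and $\gcd(a_1,f)=1$.

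With coordinates so chosen, the identity $df\wedge\omega=f\Theta$ reads on coefficients as $f\mid f_1a_j-f_ja_1$ for every $j$, so $b_j:=(f_1a_j-f_ja_1)/f\in\mathcal{O}_n$ and $\eta:=\sum_j b_j\,dz_j$ is holomorphic. A short computation then yields
\[
f_1\,\omega \;=\; a_1\,df \;+\; f\,\eta,
\]
and one sets $g:=f_1$, $h:=a_1$. The coprimality conditions $\gcd(g,f)=\gcd(h,f)=1$ hold by construction.

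The main obstacle I anticipate is precisely the first step: justifying $df\wedge\omega=f\Theta$ as a genuine identity of holomorphic germs, rather than a statement holding only off the singular set. One must argue that the coefficients of $df\wedge\omega$ are divisible by $f$ in $\mathcal{O}_n$, which requires both the reducedness of $f$ (so that vanishing on $V_{reg}$ implies divisibility by $f$) and Hartogs-type extension across the codimension-two locus $\sing(V)\cup(\sing(\F)\cap V)$, where invariance was not directly available.
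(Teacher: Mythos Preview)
The paper does not give its own proof of this lemma: it is stated with attribution to Saito \cite{saito} (and to \cite{lins}, \cite{suwa_livro} for $n=2$) and then used as a black box. Your argument is correct and is in fact the standard one.

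One remark on the ``main obstacle'' you flag at the end: no Hartogs-type extension is actually needed. The coefficients $f_ia_j-f_ja_i$ of $df\wedge\omega$ are holomorphic on the full neighborhood from the start; once you know they vanish on the dense subset $V_{reg}\setminus\sing(\F)$ of $V$, continuity forces vanishing on all of $V$, and reducedness of $f$ then yields $f\mid f_ia_j-f_ja_i$ in $\mathcal{O}_{\C^n,0}$ directly. Your invocation of saturation of $\omega$ (to ensure some $a_i$ is coprime to $f$, hence $\gcd(h,f)=1$) is genuinely necessary and is implicit in the phrase ``$\omega$ defines $\F$'': without it the conclusion can fail, e.g.\ $\omega=z_1(z_1\,dz_2-z_2\,dz_1)$ with $f=z_1$ in $\C^2$.
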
 
\par Using the Saito's decomposition, we can now state a similar result to Brunella \cite[Proposition 5]{index}.
\begin{proposition}\label{prop-variational}
Let $\F$, $V$ and $Z$ be as in Lemma \ref{saito_lemma}. Then 
$$\var(\F,V,Z)=\frac{1}{2\pi i}\int_{\Gamma}\left(\frac{g}{h}d\left(\frac{h}{g}\right)-\frac{\eta}{h}\right),$$
where $\Gamma$ is as in equation (\ref{index_vari}). 
\end{proposition}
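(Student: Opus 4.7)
The plan is to construct an explicit smooth $(1,0)$-form $\beta$ satisfying $d\omega=\beta\wedge\omega$ on a punctured neighborhood of a generic point $p\in Z$, and then observe that its restriction to the transverse curve $\Sigma_{p}$ agrees, up to a form that vanishes on $V$, with the meromorphic expression in the statement. The starting point is to pass from $\omega$ to the auxiliary $(1,0)$-form
\begin{equation*}
\omega'\;:=\;\frac{g}{h}\,\omega\;=\;df+f\,\frac{\eta}{h},
\end{equation*}
which defines $\F$ on $\{h\neq 0\}$ and for which the Saito decomposition has already been ``solved for $df$''.

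A direct computation, using $df=\omega'-f(\eta/h)$, gives
\begin{equation*}
d\omega'\;=\;df\wedge\frac{\eta}{h}+f\,d\!\left(\frac{\eta}{h}\right)\;=\;-\frac{\eta}{h}\wedge\omega'+f\,d\!\left(\frac{\eta}{h}\right).
\end{equation*}
The Frobenius integrability $\omega\wedge d\omega=0$ translates into $\omega'\wedge d\omega'=0$ because $\omega'$ differs from $\omega$ by a nowhere-zero meromorphic factor. Substituting the displayed expression for $d\omega'$ and cancelling the factor $f$ on $\{f\neq0\}$, one obtains $\omega'\wedge d(\eta/h)=0$, which then extends by continuity. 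The de Rham division lemma therefore furnishes a smooth $(1,0)$-form $\sigma$ on an appropriate open subset of $B_p^{*}$ with $d(\eta/h)=\omega'\wedge\sigma$, so that
\begin{equation*}
d\omega'\;=\;\left(-\frac{\eta}{h}-f\sigma\right)\wedge\omega'.
\end{equation*}
Hence $\beta':=-\eta/h-f\sigma$ is an admissible connection form for $\omega'$.

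Since $\omega=(h/g)\omega'$, a short computation shows that the corresponding connection form for $\omega$ is obtained by adding the logarithmic derivative of the conformal factor, namely $\beta=\beta'+(g/h)\,d(h/g)$. Substituting the formula for $\beta'$ one finds
\begin{equation*}
\beta\;=\;\frac{g}{h}\,d\!\left(\frac{h}{g}\right)-\frac{\eta}{h}-f\sigma.
\end{equation*}
Because $\Gamma$ lies inside $\Sigma_{p}\subset V=\{f=0\}$, the coefficient $f$ vanishes along $\Gamma$, so the last term contributes nothing to the line integral. Combining this with the definition $(\ref{index_vari})$ of $\var(\F,V,Z)$ yields the claimed formula.

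The main obstacle I anticipate is verifying that $\omega'$, $\beta'$ and $\sigma$ are genuinely defined on a punctured neighborhood of $p$: one must choose the generic point $p\in Z$ so that, in addition to the standard conditions (smooth point of $Z$, disjoint from the other singular components), the auxiliary divisors $\{h=0\}$ and $\{g=0\}$ either avoid $p$ or meet $\Sigma_{p}$ at worst at $p$ itself, and one must apply the de Rham division lemma on an open set where $\omega'$ is a nonzero defining form for $\F$. The coprimality statements $(h,f)=1$ and $(g,f)=1$ built into Lemma \ref{saito_lemma}, together with the freedom in choosing $p$ on $Z$, make this arrangement possible and leave the integral along $\Gamma$ unaffected.
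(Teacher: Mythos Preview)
Your argument is correct, but it takes a noticeably different route from the paper's own proof. The paper works entirely along $V$: it writes $\omega=(h/g)\,df+f\,(\eta/g)$, differentiates, sets $f=0$ (i.e., evaluates the resulting forms at points of $V$), and reads off directly that at points of $V$ one has $d\omega=\bigl[(g/h)\,d(h/g)-\eta/h\bigr]\wedge\omega$. No de Rham division and no auxiliary form $\sigma$ are needed; the paper implicitly uses the fact that any two admissible $\beta$'s differ by a multiple of $\omega$, which integrates to zero along $\Gamma\subset V$, so it suffices to produce the correct relation on $V$ rather than on all of $B_p^{*}$.

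Your approach, by contrast, actually constructs an admissible $\beta$ on an open subset of the ambient space, at the cost of invoking the de Rham division lemma and controlling the auxiliary divisors $\{g=0\}$, $\{h=0\}$. This is more explicit and perhaps more reassuring to a reader who wants to see a genuine $\beta$ satisfying the defining equation $d\omega=\beta\wedge\omega$ off $V$, and it makes transparent exactly which extra term ($f\sigma$) one is discarding on $\Gamma$. The paper's argument is shorter and avoids the domain-of-definition analysis you flag in your last paragraph, but it leaves to the reader the small step that equality of $\beta$'s modulo $\omega$ on $V$ already pins down the integral over $\Gamma$. Both approaches are valid; yours buys explicitness, the paper's buys brevity.
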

\begin{proof}
By Lemma \ref{saito_lemma} we have 
\begin{eqnarray*}
%g\omega&=&hdf+f\eta\\
\omega&=&\frac{h}{g}df+f\frac{\eta}{g}.
\end{eqnarray*}
Therefore, $$d\omega=d\left(\frac{h}{g}\right)\wedge df+df\wedge\frac{\eta}{g}+fd\left(\frac{\eta}{g}\right).$$ Restringing to $V$, we get 
\begin{eqnarray}
d\omega&=&d\left(\frac{h}{g}\right)\wedge df+df\wedge\frac{\eta}{g}\label{eq1}
\end{eqnarray}
and 
\begin{eqnarray}
\omega&=&\frac{h}{g}df.\label{eq2}
\end{eqnarray}
From (\ref{eq1}) and (\ref{eq2}) it follows that 
\begin{eqnarray*}
d\omega&=&\left(d\left(\frac{h}{g}\right)-\frac{\eta}{g}\right)\wedge df\\
&=&\left(\frac{g}{h}d\left(\frac{h}{g}\right)-\frac{\eta}{h}\right)\wedge\omega.
\end{eqnarray*}
Hence $$\var(\F,V,Z)=\frac{1}{2\pi i}\int_{\Gamma}\left(\frac{g}{h}d\left(\frac{h}{g}\right)-\frac{\eta}{h}\right),$$ where $\Gamma$ is a curve as in (\ref{index_vari}).
\end{proof}
%\begin{proof}
%First note that $\frac{\omega}{f}$ is a meromorphic 1-form on $U$ with poles along $V$. The invariance condition is equivalent to $$\frac{\omega}{f}\wedge df\in\Omega^2(U).$$
%According to Saito's lemma \cite{saito}, we have that there exists holomorphic functions $g,h\in\mathcal{O}(U)$ and a holomorphic 1-form $\eta$ on $U$ such that the complex subvariety $V\cap \{g=0\}$ has codimension at least two and 
%\begin{eqnarray*}
%g \frac{\omega}{f}&=&h \frac{df}{f}+\eta 
%\end{eqnarray*}
%which  implies that 
%\begin{eqnarray}\label{decomposition}
%g\omega&=&hdf+f\eta.
%\end{eqnarray}
%Since $V\cap \{g=0\}$ has codimension at least two, one has that $g$ is not identically zero along $V$. 
%We asserts that $h$ also is not identically zero along $V$. In fact, suppose by contradiction that $h$ is identically zero along $V$ then it follows from (\ref{decomposition}) that  $$\frac{\omega}{f}=\frac{\eta}{g}\,\,\,\,\,\,\,\,\,\text{on}\,\,\,\,\,V.$$
%Taking a point $p\in V$ such that $g(p)\neq 0$, by continuity, we get a neighborhood $U'\subset V$ of $p$ where $\eta/g$ is a holomorphic 1-form on $U'$, but it is a contradiction with the fact that $\omega/f$ is a pure meromorphic 1-form along $V$. 
%\end{proof}

\subsection{GSV-index for codimension one holomorphic foliations}
A. G. Aleksandrov in \cite{aleksandrov} introduced the concept of \textit{multiple residues} of logarithmic differentials forms and generalize the Saito's decomposition theorem \cite{saito}. Using Aleksandrov's decomposition theorem, Corr\^ea and Machado defined in \cite{machado} the GSV-index for holomorphic Pfaff systems. In this subsection, we particularizing this definition for codimension one holomorphic foliations.

\par Let $\F$ be a germ of codimension one singular holomorphic foliation at $0\in\C^n$, $n\geq 3$, defined by a germ of an integrable holomorphic 1-form $\omega$. Suppose $V=\{f=0\}$ is a germ at $0\in\C^n$ of reduced complex hypervariety invariant by $\F$. Then by Lemma \ref{saito_lemma} we have $$g\omega=hdf+f\eta.$$
For each irreducible component $Z$ of $\sing_2(\F,V)$, Corr\^ea and Machado \cite{machado} defined the \textit{GSV-index} as follows:
\begin{equation}
\gsv(\F,V,Z):=\ord_{Z}\left(\frac{h}{g}\Big{|}_{V}\right).
\end{equation}
According to Corr\^ea-Machado \cite[Theorem 3.2]{machado} we can formulate:
\begin{theorem}[Corr\^ea-Machado \cite{machado}]\label{gsv-index}
Let $\F$ be a codimension one holomorphic foliation on a compact complex manifold $X$ of dimension at least three and let $V\subset X$ be a reduced complex hypervariety invariant by $\F$ such that $\sing_2(\F,V)\neq\emptyset$. Denote by $N_{V/X}$ the normal bundle of $V$ in $X$. 
Then
\begin{equation}
\sum_{Z}\gsv(\F,V,Z)[Z]=c_1(N_{\F}|_V\otimes (N_{V/X})^{-1})\smallfrown [V],
\end{equation}
where the sum is done over all irreducible components $Z$ of $\sing_2(\F,V)$ and $[Z]$ denotes the integration current associated to $Z$.
\end{theorem}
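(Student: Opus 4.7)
The plan is to read the local ratios $h_k/g_k|_V$ coming from Saito's decomposition as a single global meromorphic section $\sigma$ of the line bundle $N_\F|_V \otimes N_{V/X}^{-1}$ on $V$, whose divisor is $\sum_Z \gsv(\F,V,Z)[Z]$, and then invoke the Poincar\'e--Lelong formula. In other words, the theorem would follow from the fact that a meromorphic section of a line bundle represents its first Chern class.

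First I would cover $X$ by trivializing opens $U_k$ where $\F$ is defined by $\omega_k$ with $\omega_k = g_{k\ell}\omega_\ell$, and $V \cap U_k = \{f_k = 0\}$ with $f_k = \varphi_{k\ell} f_\ell$. Applying Lemma \ref{saito_lemma} chart by chart produces holomorphic $g_k,h_k$ and 1-forms $\eta_k$ with $g_k\omega_k = h_k\,df_k + f_k\eta_k$ and $\gcd(g_k,f_k)=\gcd(h_k,f_k)=1$. A direct comparison of two such decompositions in a single chart shows that $h_k/g_k|_{V_k}$ is independent of the Saito choices (the unit ambiguity cancels in the ratio, and the coprimality assertions preclude further freedom after restriction to $V$). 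Restricting the Saito identity to $V$ gives $\omega_k|_V = (h_k/g_k)\,df_k|_V$, while differentiating $f_k = \varphi_{k\ell}f_\ell$ and restricting gives $df_k|_V = \varphi_{k\ell}\,df_\ell|_V$. Combining these with $\omega_k = g_{k\ell}\omega_\ell$ forces
$$\frac{h_k}{g_k}\bigg|_V = \frac{g_{k\ell}}{\varphi_{k\ell}} \cdot \frac{h_\ell}{g_\ell}\bigg|_V \qquad \text{on } V_k \cap V_\ell.$$
Since $\{g_{k\ell}\}$ and $\{\varphi_{k\ell}|_V\}$ are the cocycles of $N_\F$ and $N_{V/X}=\mathcal{O}_X(V)|_V$ respectively, this identifies $\{h_k/g_k|_{V_k}\}$ with a global meromorphic section $\sigma$ of $N_\F|_V \otimes N_{V/X}^{-1}$.

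Next I would verify that $\sigma$ is holomorphic and nowhere vanishing on $V \setminus \sing_2(\F,V)$: at a smooth point of $V$ at which $\F$ is regular, one may choose local coordinates in which $\omega = df$, so that $h = g = 1$. Therefore $\mathrm{div}(\sigma) = \sum_Z \ord_Z(h/g|_V)\,[Z] = \sum_Z \gsv(\F,V,Z)\,[Z]$ by the very definition of the GSV-index, and the Poincar\'e--Lelong formula for meromorphic sections of line bundles yields
$$\sum_Z \gsv(\F,V,Z)[Z] = c_1(N_\F|_V \otimes N_{V/X}^{-1}) \smallfrown [V].$$

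The main obstacle is the treatment of the singularities of $V$: since $V$ is only assumed reduced, $\sing(V)\subset\sing_2(\F,V)$ can be non-empty, and the classical Poincar\'e--Lelong equality is stated for line bundles on smooth varieties. Making the current-theoretic identification rigorous in this singular setting requires either working on the regular locus of $V$ and controlling the contribution of $\sing(V)$ (which is absorbed into the left-hand sum, since $\gsv(\F,V,Z)$ is defined along every component $Z$ of $\sing_2(\F,V)$, including those coming from $\sing(V)$), or passing to a normalization of $V$ and pushing the resulting identity forward. A secondary technical point is to ensure that $N_{V/X}$ is the correct object to appear along $\sing(V)$ — that is, to fix a sheaf-theoretic framework (e.g.\ the coherent normal sheaf, or the conormal bundle of the ambient divisor restricted to $V$) in which the Chern-class/integration-current calculus is valid throughout the argument.
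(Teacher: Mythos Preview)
The paper does not actually prove this theorem: it is stated as a result of Corr\^ea--Machado \cite{machado} and simply quoted (``According to Corr\^ea--Machado \cite[Theorem 3.2]{machado} we can formulate\ldots''), with no argument given. So there is no ``paper's own proof'' to compare against.

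That said, your approach is the natural one and is essentially the argument one expects in \cite{machado}: package the local Saito data $h_k/g_k|_V$ as a global meromorphic section of $N_\F|_V\otimes N_{V/X}^{-1}$, read off its divisor as $\sum_Z\gsv(\F,V,Z)[Z]$ from the very definition of the index, and conclude by Poincar\'e--Lelong. Your cocycle computation is correct.

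On the two technical worries you flag: the second one is not really an obstacle. Since $V$ is a (Cartier) divisor in the \emph{smooth} manifold $X$, the normal bundle is unambiguously $N_{V/X}=\mathcal{O}_X(V)|_V$, which is an honest line bundle on all of $V$, singular points included; there is no need to pass to a normal sheaf or worry about what happens along $\sing(V)$ at this stage. For the first concern (Poincar\'e--Lelong on a singular $V$): you can sidestep it by working ambiently. Both sides of the desired identity live naturally as currents on $X$ via the inclusion $i:V\hookrightarrow X$, and the line bundle in question extends to $N_\F\otimes\mathcal{O}_X(-V)$ on $X$. The equality of currents can then be checked on $X$ (where everything is smooth), or equivalently on the regular locus $V_{\mathrm{reg}}$ together with the observation that both sides, being closed currents of the right type with locally finite mass, extend uniquely across the codimension-$\geq 2$ set $\sing(V)$. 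Either route closes the gap you identified.
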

\subsection{Camacho-Sad index for codimension one holomorphic foliations}  Define the \textit{Camacho-Sad index} as follows: 
\begin{equation}\label{cs-index}
\cs(\F,V,Z):=\var(\F,V,Z)-\gsv(\F,V,Z).
\end{equation}
When $V$ is smooth, $CS(\F,V,Z)$ coincide with the index defined by Lins Neto \cite{cs_lins}, see also \cite{gmira}. Note that Theorem \ref{suma_variational} and Theorem \ref{gsv-index} implies the following result of global nature. 
\begin{theorem}
Let $\F$ be a codimension one holomorphic foliation on a compact complex manifold $X$ of dimension at least three and let $V\subset X$ be a reduced complex hypervariety invariant by $\F$ such that $\sing_2(\F,V)\neq\emptyset$. 
Then
$$\sum_{Z}\cs(\F,V,Z)[Z]=c_1(N_{V/X})\smallfrown [V]$$
where the sum is done over all irreducible components $Z$ of $\sing_2(\F,V)$ and $[Z]$ denotes the integration current associated to $Z$.
\end{theorem}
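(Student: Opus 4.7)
The statement follows immediately from combining Theorem \ref{suma_variational} and Theorem \ref{gsv-index} via the definition (\ref{cs-index}) of the Camacho-Sad index, so my plan is essentially a bookkeeping exercise with line bundles. Here is how I would organize the argument.

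First I would unwind the definition: by (\ref{cs-index}), for each irreducible component $Z$ of $\sing_2(\F,V)$ we have $\cs(\F,V,Z)=\var(\F,V,Z)-\gsv(\F,V,Z)$. Since the integration currents $[Z]$ are linear in their coefficients, summing over all $Z$ gives
$$\sum_{Z}\cs(\F,V,Z)[Z]=\sum_{Z}\var(\F,V,Z)[Z]-\sum_{Z}\gsv(\F,V,Z)[Z].$$

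Next I would invoke the two already-proved residue formulas. Theorem \ref{suma_variational} identifies the first sum on the right with $c_1(N_{\F}|_V)\smallfrown [V]$, while Theorem \ref{gsv-index} identifies the second sum with $c_1(N_{\F}|_V\otimes (N_{V/X})^{-1})\smallfrown [V]$. Substituting, the right-hand side becomes
$$\bigl[c_1(N_{\F}|_V)-c_1(N_{\F}|_V\otimes (N_{V/X})^{-1})\bigr]\smallfrown [V].$$

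Finally I would use additivity of the first Chern class on line bundles, namely $c_1(L_1\otimes L_2^{-1})=c_1(L_1)-c_1(L_2)$ applied to $L_1=N_{\F}|_V$ and $L_2=N_{V/X}$, which produces $c_1(N_{\F}|_V)-c_1(N_{\F}|_V\otimes (N_{V/X})^{-1})=c_1(N_{V/X})$ in $H^2(V;\C)$. The asserted identity
$$\sum_{Z}\cs(\F,V,Z)[Z]=c_1(N_{V/X})\smallfrown [V]$$
follows at once.

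There is no real obstacle here, since both ingredient theorems are already established in the paper; the only delicate point to double-check is that the two residue decompositions are taken over the \emph{same} index set of components $Z$ of $\sing_2(\F,V)$, so that the component-wise difference $\var-\gsv$ is precisely $\cs$. This is granted by the setup of Sections \ref{variational_section} and \ref{gsv}, both of which index residues by the irreducible components of $\sing_2(\F,V)$ and treat $[Z]$ as the associated integration current on $V$.
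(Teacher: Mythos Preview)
Your proposal is correct and matches the paper's approach exactly: the paper simply states that the result follows from Theorem \ref{suma_variational} and Theorem \ref{gsv-index}, and your argument supplies precisely those routine details via the definition $\cs=\var-\gsv$ and the additivity of $c_1$ for line bundles.
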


\section{Baum-Bott index}\label{section_baum}
In this section, we define the Baum-Bott index, following \cite{perrone}. Similarly to above section, we work  with smooth sections of $N^{*}_{\F}$, instead of holomorphic ones, then there exists a smooth (1,0)-form $\beta\in A^{1,0}(B^{*}_p)$ such that 
$d\omega=\beta\wedge \omega$, where $\omega$ is a local generator of $N^{*}_{\F}$. The smooth 3-form (of mixed type $(3,0)+(2,1)$) 
\begin{equation}\label{bb}
\frac{1}{(2\pi i)^2}\beta\wedge d\beta
\end{equation}
is closed, and it has a De Rham cohomology class in $H^{3}(B^{*}_p,\mathbb{C})$, which does not depend on the choice of $\omega$ and $\beta$.
\par Let $Z$ be an irreducible component of $\sing_2(\F)$. Take a generic point $p\in Z$ and pick $B_p$ sufficiently small ball, so that $S(B_p):=\sing_2(\F)\cap B_p$ is a codimension two subball of $B_p$. Then the above De Rham class can be integrated over an oriented 3-sphere $L_p\subset B^{*}_p$ positively linked with $S(B_p)$: 
$$\bb(\F,Z)=\frac{1}{(2\pi i)^2}\int_{L_{p}}\beta\wedge d\beta.$$
This complex number is called \textit{Baum-Bott residue of $\F$ along $Z$}. Again by a connectedness argument, it does not depend on the choice of the generic point $p\in Z$. 
\par Let us recall that every irreducible component $Z$ of $\sing_2(\F)$ has a class $[Z]\in H^4(X,\mathbb{C})$ (conveniently defined via the integration current over $Z$). Therefore, we have the following result.
\begin{theorem}[Baum-Bott \cite{baum}, Brunella-Perrone \cite{perrone}]\label{baum-bott}
$$\sum_{Z}\bb(\F,Z)[Z]=c_1^{2}(N_\F)$$
where the sum is done over all irreducible components of $\sing_2(\F)$.
\end{theorem}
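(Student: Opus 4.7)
The plan is to adapt the proof of Theorem~\ref{suma_variational}, now with $\Omega\wedge\Omega$ in place of $\Omega$. I retain the cover $\{U_k\}$, the local generators $\omega_k$ of $N^*_\F$ with $\omega_k = g_{k\ell}\omega_\ell$, smooth Bott forms $\gamma_k$ on $U_k\setminus\sing_2(\F)$ with $d\omega_k = \gamma_k\wedge\omega_k$, and their regularizations $\hat\gamma_k = \tilde\gamma_k + \zeta_k$ over a small neighborhood $U$ of $\sing_2(\F)$ satisfying $dg_{k\ell}/g_{k\ell} = \hat\gamma_k - \hat\gamma_\ell$ and $d\omega_k = \hat\gamma_k\wedge\omega_k$ outside $U$. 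The globally defined closed $2$-form $\Omega = \frac{1}{2\pi i}d\hat\gamma_k$ represents $c_1(N_\F)$, so the smooth closed $4$-form $\Omega\wedge\Omega$ represents $c_1^2(N_\F)$ in de Rham cohomology.

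The first key step is the localization $\supp(\Omega\wedge\Omega)\subset\overline U$. Outside $U$, differentiating $d\omega_k = \hat\gamma_k\wedge\omega_k$ gives $\Omega\wedge\omega_k = 0$; since $\omega_k$ is a nowhere-vanishing $(1,0)$-form on $U_k\setminus\sing(\F)$, a pointwise algebraic argument produces a smooth $1$-form $\alpha_k$ with $\Omega = \omega_k\wedge\alpha_k$, and therefore $\Omega\wedge\Omega = 0$ on $X\setminus\overline U$. Writing $\sing_2(\F) = \bigcup_{j=1}^m Z_j$, I choose pairwise disjoint open tubular neighborhoods $V_j\subset U_j$ of $Z_j$ in $X$ (refining $\{U_k\}$ if needed) containing the local support of $\Omega\wedge\Omega$ near $Z_j$, with $\partial V_j\subset X\setminus U$. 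For any closed smooth $(2n-4)$-form $\psi$ on $X$,
$$\int_X\Omega\wedge\Omega\wedge\psi = \sum_{j=1}^m\int_{V_j}\Omega\wedge\Omega\wedge\psi.$$

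On $V_j$ the $3$-form $\eta_j = \frac{1}{(2\pi i)^2}\hat\gamma_j\wedge d\hat\gamma_j$ is a smooth primitive of $\Omega\wedge\Omega$, so Stokes together with $d\psi = 0$ gives $\int_{V_j}\Omega\wedge\Omega\wedge\psi = \int_{\partial V_j}\eta_j\wedge\psi$. On $\partial V_j\subset X\setminus U$, the condition $\zeta_j\wedge\omega_j = 0$ forces $\zeta_j = f_j\omega_j$ for a smooth function $f_j$, and a direct expansion using $d\gamma_j\wedge\omega_j = 0$ and $\omega_j\wedge\omega_j = 0$ shows that $\hat\gamma_j\wedge d\hat\gamma_j - \gamma_j\wedge d\gamma_j$ is exact on $U_j\setminus\sing_2(\F)$. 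Hence, on the closed manifold $\partial V_j$, the integral is unchanged if $\eta_j$ is replaced by the genuine Baum-Bott form $\frac{1}{(2\pi i)^2}\gamma_j\wedge d\gamma_j$.

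Finally, the boundary integral is computed via the $S^3$-bundle structure $\pi\colon\partial V_j\to Z_j$. Around a generic point $p\in Z_j$, coordinates $(z_1,z_2,z')$ with $Z_j = \{z_1=z_2=0\}$ produce a local product $\partial V_j\cap W_p\cong L_p\times\Delta_p$ with $L_p$ an oriented $S^3$-link in $B_p^*$; the closedness of $\gamma_j\wedge d\gamma_j$ on $U_j\setminus\sing_2(\F)$ (from $d\gamma_j = \omega_j\wedge\mu_j$) together with a cobordism argument make the fiberwise integral
$$\frac{1}{(2\pi i)^2}\int_{L_w}\gamma_j\wedge d\gamma_j = \bb(\F, Z_j)$$
constant in $w\in Z_j$. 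Fubini and a partition of unity on $Z_j$ then yield $\int_{\partial V_j}\eta_j\wedge\psi = \bb(\F, Z_j)[Z_j](\psi)$, and summing over $j$ followed by Poincar\'e duality concludes $\sum_j\bb(\F, Z_j)[Z_j] = c_1^2(N_\F)$. The main obstacle is this Fubini step: one must justify rigorously that mixed-bidegree contributions of $\eta_j\wedge\psi$ along the $S^3$ fibers do not obstruct the projection formula, and that the fiberwise integral is globally constant on $Z_j$.
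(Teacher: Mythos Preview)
The paper does not supply its own proof of Theorem~\ref{baum-bott}; it is quoted as a known result from \cite{baum} and \cite{perrone}. Your argument is essentially the Brunella--Perrone proof, recast in the language of Theorem~\ref{suma_variational}: build a global Bott connection, show $\Omega\wedge\Omega$ is supported near $\sing_2(\F)$, and collapse the boundary integral onto $Z_j$ via an $S^3$-fibration. The localization step (from $\Omega\wedge\omega_k=0$ to $\Omega=\omega_k\wedge\alpha_k$ hence $\Omega\wedge\Omega=0$) is correct, and your computation that $\hat\gamma_j\wedge d\hat\gamma_j-\gamma_j\wedge d\gamma_j$ is exact on $\partial V_j$ (indeed it equals $-d(f_j\,d\omega_j)$) is right.

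The Fubini step you flag is genuinely where the work is, and your sketch is too compressed to count as a proof. The standard way to finish is to shrink the tube radius: for $\partial V_j(\epsilon)=\{|z_1|^2+|z_2|^2=\epsilon^2\}$ locally, decompose $\psi$ by bidegree in the normal variables $(z_1,z_2)$ versus the tangential variables $z'$; the components of $\eta_j\wedge\psi$ carrying at least one normal differential from $\psi$ integrate to $O(\epsilon)$ over the $S^3$-fiber and die in the limit, while the purely tangential part of $\psi$ survives and pairs with $\int_{L_p}\eta_j$. The constancy of this fiber integral along $Z_j$ follows, as you note, from the closedness of $\gamma_j\wedge d\gamma_j$ and the fact that links at different generic points are homologous in the complement of $\sing_2(\F)$. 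With that limit argument made explicit, the proof is complete and matches \cite{perrone}.
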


\section{Almost Liouvillian foliations}\label{liouvillian}
In this section, we consider germs at $0\in\C^n$, $n\geq 3$, of singular holomorphic foliations of codimension one. Let $\F$ be a germ at $0\in\C^n$ of a codimension one holomorphic foliation such that $0\in\sing(\F)$ and suppose that $\F$ is defined by a germ of an integrable holomorphic 1-form $\omega\in\Omega^{1}(\C^n,0)$. 
Let $\sing_2(\F)$ be the germ at $0\in\C^n$ defined by the union of the germs at $0\in\C^n$ of irreducible components of the singular set of $\F$ whose codimension is precisely two. In this section, we assume that $\sing_2(\F)$ is not empty. 
\begin{definition}\label{def_almost}
We say that the germ $\F$ is an \textit{almost Liouvillian foliation} at $0\in\C^n$ if there exists a germ of closed meromorphic 1-form $\gamma_0$ and a germ of holomorphic 1-form $\gamma_1$ at $0\in\C^n$ such that 
\begin{equation}\label{poles}
d\omega=(\gamma_0+\gamma_1)\wedge \omega.
\end{equation}
We say that $\F$ is a \textit{simple almost Liouvillian foliation} at $0\in\mathbb{C}^n$ if we can choose $\gamma_0$ having only first-order poles.
\end{definition}
The next lemma was proved by Brunella \cite{index} in the two-dimensional case. We extend this fact for high dimension. 
\begin{lemma}
If $\F$ is almost Liouvillian foliation at $0\in\C^n$ defined by $\omega\in\Omega^{1}(\C^n,0)$, $n\geq 3$. Then the poles divisor of $\gamma=\gamma_0+\gamma_1$ is invariant by $\F$.
\end{lemma}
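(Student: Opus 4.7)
The plan is to localize near a generic smooth point of each irreducible component of the polar divisor of $\gamma$ and exploit the two identities at hand: the defining equation $d\omega=\gamma\wedge\omega$ and the closedness $d\gamma_{0}=0$. Since $\gamma_{1}$ is holomorphic, the polar divisor of $\gamma$ coincides (as a set) with that of $\gamma_{0}$, so it suffices to show that each irreducible component $D=\{f=0\}$ of the polar locus of $\gamma_{0}$ is $\F$-invariant, which at smooth points of $D$ is the same as $\omega\wedge df$ being locally divisible by $f$.

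Fix a generic $p\in D$: a smooth point of $D$ at which $\gamma_{0}$ has poles only along $D$. Near $p$ I would write $\gamma_{0}=\theta/f^{k}$ with $k\geq 1$, $\theta$ a holomorphic $1$-form, and $f\nmid\theta$ (take the minimal pole order). Multiplying $d\omega=(\gamma_{0}+\gamma_{1})\wedge\omega$ by $f^{k}$ gives
\[
\theta\wedge\omega\;=\;f^{k}\bigl(d\omega-\gamma_{1}\wedge\omega\bigr),
\]
so $\theta\wedge\omega$ is divisible by $f$. On the other hand, $d\gamma_{0}=0$ expands to $f\,d\theta=k\,df\wedge\theta$, so $df\wedge\theta$ is also divisible by $f$. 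These are the two key local relations driving the argument.

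Now I would restrict both relations to $D$. Along the open dense subset of $D$ where $\theta\neq 0$, the identities $\theta\wedge\omega=0$ and $df\wedge\theta=0$ on $D$ force both $\omega$ and $df$ to be scalar multiples of the same nonzero $\theta$ pointwise, hence $\omega\wedge df=0$ at such points. By analytic continuation, $\omega\wedge df$ vanishes identically on $D$ in a neighbourhood of $p$, and consequently $\omega\wedge df=f\cdot\eta$ for some holomorphic $2$-form $\eta$. This is the defining condition for $D$ to be invariant by $\F$; since $D$ is irreducible, invariance at a single generic point of $D$ is enough.

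The delicate step, which I see as the main obstacle, is justifying that $\theta$ does not vanish identically on $D$: if it did, then $\theta$ would be divisible by $f$ (because $f$ is reduced and generates the ideal of $D$ at smooth points), contradicting the minimality built into the representation $\gamma_{0}=\theta/f^{k}$. Once this is in place, the linear-algebra step "two forms each parallel to a common nonzero third form are parallel to each other" and the standard passage from "vanishes on a smooth divisor $D$" to "divisible by $f$" close the argument. No hypothesis on the order of the poles is used, so the result holds in the full almost Liouvillian case, not only in the simple one.
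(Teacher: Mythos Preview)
Your proof is correct, but it follows a genuinely different path from the paper's argument. The paper localizes at a smooth point $p$ of the polar divisor $V$ lying \emph{outside} $\sing(\F)$, chooses coordinates adapted to the foliation so that $\omega=a\,dz_n$ with $a$ a unit, and then reads off from $d\omega=\gamma\wedge\omega$ that the components $b_1,\ldots,b_{n-1}$ of $\gamma$ are holomorphic; closedness of $\gamma_0$ then forces its polar part to depend on $z_n$ alone, so the germ of $V$ at $p$ is the leaf $\{z_n=0\}$. Your argument instead adapts coordinates to $D$ rather than to $\F$: writing $\gamma_0=\theta/f^k$ you extract the two relations $\theta\wedge\omega\equiv 0$ and $df\wedge\theta\equiv 0$ modulo $f$, and a pointwise linear-algebra step plus the identity principle yields $\omega\wedge df\equiv 0$ modulo $f$. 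The paper's route is shorter and more direct once one has trivialized $\omega$, but it needs the generic point to avoid $\sing(\F)$; your route is coordinate-free with respect to $\F$, exhibits the invariance condition $f\mid\omega\wedge df$ explicitly, and does not require stepping outside the singular set of the foliation. Both use only the almost Liouvillian hypothesis, not the simple one.
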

\begin{proof}
Let $V=(\gamma)_{\infty}$ be the poles divisor of $\gamma$. If $p$ is a smooth point of $V$ such that $p\not\in\sing(\F)$, then there exists a coordinate system $(z_1,\ldots,z_n)$ at $p$ such that $z_1(p)=\ldots=z_n(p)=0$  and $\omega=a(z_1,\ldots,z_n)dz_n$, where $a\in\mathcal{O}^{*}(\C^n,0)$. Let $\gamma=b_1dz_1+\ldots+b_ndz_n$, where $b_1,\ldots,b_n$ are germs of meromorphic functions at $0\in\C^n$. It follows from (\ref{poles}) that $b_1,\ldots,b_{n-1}\in\mathcal{O}(\C^n,0)$ and therefore $\gamma_0=b_ndz_n$. Since $\gamma_0$ is closed, we get $b_n=b_n(z_n)$ and $\gamma_0$ may be written as $$\gamma_0=\frac{h(z_n)}{z^{k}_n}dz_n,$$ where $h(z_n)$ is a holomorphic function and $k\geq 1$. Consequently, the germ of $V$ at $p$ is given by $\{z_n=0\}$, which implies that $V$ is invariant by $\omega$.
\end{proof}
\begin{remark}\label{var_zero}
Let $\F$ be a germ at $0\in\C^n$, $n\geq 3$, of a codimension one holomorphic foliation such that $Z$ is a germ at $0\in\C^n$ of an irreducible component of $\sing_2(\F)$. Suppose that there exists a germ at $0\in\C^n$ of a complex hypervariety $V$ invariant by $\F$ such that $V$ does not contain $Z$. Then it is not difficult to see that the definition of index variational $\var(\F,V,Z)$ (see for instance (\ref{index_vari})) may be extended to an irreducible component $Z$ of $\sing_2(\F)$ that is not contained in $V$. In this case, we have $\var(\F,V,Z)=0$.
\end{remark}
%Now we prove the following lemma.
%\begin{lemma}\label{var_zero}
%Let $\F$ be a germ of codimension one holomorphic foliation at $0\in\C^n$, $n\geq 3$ and let $V$ be a germ of a reduced complex hypervariety invariant by $\F$. Let $Z$ be a irreducible component of $\sing_2(\F)$. Suppose that $V$ does not contain $Z$. Then $\var(\F,V,Z)=0$.
%\end{lemma}
%\begin{proof}
%Let $U$ be a neighborhood of $0\in\C^n$ and assume that $\omega\in\Omega^1(U)$ represents $\F$ in $U$. Take a point $p\in V\cap Z$ and a small ball $B_p\subset V\cap U$ centered at $p$. Then there exists a holomorphic 1-form $\beta$ in a neighborhood of $B_p\setminus (V\cap Z)$ such that 
%$$d\omega=\beta\wedge\omega.$$
%Note that $V\cap Z$ is a complex subvariety of $V$ and by hypothesis $V$ does not contain $Z$, then $V\cap Z$ has codimension at least two at $p$. Therefore $\beta$ extends to a holomorphic 1-form in a neighborhood of $B_p$ in $V$, by the Levi's extension theorem. Therefore the integration of $\beta$ over any closed curve in $B_p$ is zero. Hence
%$\var(\F,V,Z)=0$.
%\end{proof}
\par The next theorem extends a result due to Brunella \cite[Proposition 8]{index}. This result provides an effective way of computing Baum-Bott residues of codimension one holomorphic foliations in high dimension. We remark that 
germs at $0\in\C^3$ of codimension one holomorphic foliations with reduced singularities (see for instance Cano \cite{cano}), logarithmic foliations and some transversally affine foliations are examples of simple almost Liouvillian foliations. 

\begin{theorem}\label{almost_foliation}
Let $\F$ be a germ at $0\in\mathbb{C}^n$, $n\geq 3$, of a simple almost Liouvillian foliation defined by $\omega\in\Omega^1(\C^n,0)$ such that $$d\omega=(\gamma_0+\gamma_1)\wedge\omega.$$
 Let $V$ be the divisor of poles of $\gamma=\gamma_0+\gamma_1$ and $V_1,\ldots,V_{\ell}$ the irreducible components of $V$.         
Let $Z$ be an irreducible component of $\sing_2(\F)$. Then 
$$\bb(\F,Z)=\sum^{k}_{j=1}\res(\gamma_0,V_j) \var(\F,V_j,Z),$$
where $V_1,\ldots,V_k$ are the irreducible components of $V$ that contains $Z$.
\end{theorem}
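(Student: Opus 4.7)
The plan is to reduce the statement to the two-dimensional case already established by Brunella in \cite[Proposition 8]{index}: restrict $\F$ to a generic two-dimensional slice transverse to $Z$, apply Brunella's formula to the resulting plane foliation, and identify the local invariants on the slice with those of the original data.

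First I would choose a generic point $p\in Z$, namely a smooth point of $Z$ disjoint from every other irreducible component of $\sing_2(\F)$ and from every $V_j$ with $j>k$. Take local coordinates $(z_1,\ldots,z_n)$ in a small ball $B_p$ centered at $p$ with $Z\cap B_p=\{z_1=z_2=0\}$, set $T=\{z_3=\cdots=z_n=0\}\subset B_p$, and use $L_p=\{|z_1|^{2}+|z_2|^{2}=\epsilon^{2}\}\subset T$ as the linking $3$-sphere for $\bb(\F,Z)$. Because $\beta\wedge d\beta$ is smooth on $B_p^{*}$ and pull-back commutes with exterior differentiation, one obtains
$$\bb(\F,Z)=\frac{1}{(2\pi i)^{2}}\int_{L_p}\beta\wedge d\beta=\frac{1}{(2\pi i)^{2}}\int_{L_p}\beta|_{T}\wedge d(\beta|_{T})=\bb(\F|_{T},p),$$
where $\F|_T$ is the plane foliation defined by $\omega|_T$, which has an isolated singularity at $p$.

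Next I would note that restricting the identity $d\omega=(\gamma_0+\gamma_1)\wedge\omega$ gives $d(\omega|_T)=(\gamma_0|_T+\gamma_1|_T)\wedge\omega|_T$, and for a generic choice of $T$ the form $\gamma_0|_T$ is a closed meromorphic 1-form with only first-order poles, precisely along the smooth curves $V_j\cap T$ for $j=1,\ldots,k$, while $\gamma_1|_T$ is holomorphic. Hence $\F|_T$ is simple almost Liouvillian at $p$, and Brunella's two-dimensional formula gives
$$\bb(\F|_{T},p)=\sum_{j=1}^{k}\res(\gamma_0|_{T},V_j\cap T)\,\var(\F|_{T},V_j\cap T,p).$$
The residue of a closed meromorphic 1-form along an irreducible component of its polar divisor is independent of the transverse direction, so $\res(\gamma_0|_T,V_j\cap T)=\res(\gamma_0,V_j)$. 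For the variational index, definition (\ref{index_vari}) computes $\var(\F,V_j,Z)$ by integrating $\beta$ over a loop generating the first homology of a punctured one-dimensional transverse $\Sigma_p\subset V_j$ to $Z$ at $p$; a generic $T$ makes $V_j\cap T$ exactly such a $\Sigma_p$, so $\var(\F|_T,V_j\cap T,p)=\var(\F,V_j,Z)$. Combining these identifications with the two displays above yields the theorem.

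The main obstacle I anticipate is the genericity bookkeeping underlying the reduction: one must pick a single pair $(p,T)$ that simultaneously makes $Z$ smooth at $p$, keeps $T$ away from all other singular components of $\F$ and from every $V_j$ with $j>k$, meets every $V_j$ ($j\leq k$) transversally in a smooth curve through $p$, and is not contained in the polar locus of $\gamma_0$. Each of these is a generic condition on $(p,T)$ so a common choice exists, but confirming it explicitly is the delicate part. Once $T$ has been fixed, the remainder is essentially the naturality of residues and of the variational index under restriction, combined with the already established two-dimensional formula.
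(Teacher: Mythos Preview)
Your reduction to Brunella's two-dimensional formula is correct and yields the theorem, but it is a genuinely different route from the paper's argument. The paper does \emph{not} invoke \cite[Proposition~8]{index}; instead it reproduces Brunella's computation directly in the $n$-dimensional setting. Concretely, the paper takes the full $3$-sphere $S^3_\epsilon\subset B_p^{*}$, constructs tubular neighborhoods $W_j$ of the circles $\partial V_j=S^3_\epsilon\cap V_j$, patches local holomorphic Godbillon--Vey forms $\gamma'_j$ into a global smooth $\beta$ via a partition of unity, shows $\supp(\beta\wedge d\beta)\subset\bigcup_j(S^3_\epsilon\cap W_j)$, and then uses Stokes and a Fubini-type splitting of each boundary torus $\partial(S^3_\epsilon\cap W_j)$ to extract the product $\res(\gamma_0,V_j)\cdot\var(\F,V_j,Z)$.

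Your approach is more economical: by pushing the $3$-sphere into a transverse $2$-plane $T$ you avoid the partition-of-unity construction entirely and reduce everything to the existing surface result. The price is exactly the genericity bookkeeping you flagged --- one must verify that $\omega|_T$ has an isolated singularity at $p$, that $T$ meets each $V_j$ ($j\le k$) transversally in a smooth curve, and that $T$ misses the $V_j$ with $j>k$ --- but these are routine open conditions. The paper's approach, by contrast, is self-contained and makes the mechanism of the index formula visible in arbitrary dimension without relying on the surface case as a black box; it is essentially the same torus computation Brunella did, written out in coordinates adapted to each $V_j$.
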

\begin{proof}
Take a generic point $p\in Z$ and pick $B_p$ a sufficiently small ball such that $S(B_p)$ is a codimension two subball of $B_p$ at $Z$ (as in Section \ref{section_baum}). Let $S^3_{\epsilon}\subset B^{*}_p$ be an oriented 3-sphere positively linked with $S(B_p)$.   
Let $\partial{V}_j:=S^3_{\epsilon}\cap V_j$ and  let $W_j$ be a tubular neighborhood of $\partial{V}_j$ such that $W_j\cap Z=\emptyset$. Then there exists holomorphic 1-form $\gamma'_j$ in a neighborhood of $W_j$ such that 
\begin{equation}\label{index_1}
d\omega=\gamma'_j\wedge \omega.
\end{equation}
Note that $W:=\displaystyle\bigcup^{N}_{j=1}W_j$ is a tubular neighborhood of $\partial{V}=S^{3}_{\epsilon}\cap V$ and there is an partition of unity $\rho=\{\rho_j\}$ for $W$ subordinate to open cover $\{W_j\}$. With this we can define 
$$\gamma'=\rho_1\gamma'_1+\ldots+\rho_N\gamma'_N.$$ It is easily seen that $\gamma'$ is a holomorphic 1-form in a neighborhood of $W$.
\par Let $\phi\in C^{\infty}_{c}(W)$ be equal to 1 on a smaller neighborhood of $\partial{V}$. Then $\beta=\phi\gamma'+(1-\phi)\gamma'$ is a smooth $(1,0)$-form on a neighborhood of $S^3_{\epsilon}$. 
Note that $$d\omega=\beta\wedge \omega.$$
Denoting  $\beta_j=\beta|_{S^3_{\epsilon}\cap W_j}$ and $\phi_j=\phi|_{S^3_{\epsilon}\cap W_j}$ for each $j=1,\ldots, N$, we get 
$$\beta_j=\phi_j\gamma'_j+(1-\phi_j)\gamma'_j,\,\,\,\,\,\,\,\,\,\text{on}\,\,\,\,\,\, S^3_{\epsilon}\cap W_j.$$
Note also that $S^3_{\epsilon}\cap W=\displaystyle\bigcup^{N}_{j=1}(S^3_{\epsilon}\cap W_j)$.
To continue, we may choose holomorphic coordinates $(z_1,\ldots,z_n)$ near each $\partial{V}_j=S^3_{\epsilon}\cap V_j$, with $z_1$ varying on a neighborhood of the unitary circle and $(z_2,\ldots,z_n)$ on a neighborhood of the origin of $\mathbb{C}^{n-1}$ such that 
\begin{eqnarray*}
V_j&=&\{z_n=0\},\\ 
\partial{V}_j&=&\{|z_1|=1,z_2=z_3=\ldots=z_n=0\},\\
S^3_{\epsilon}\cap W_j&=&\{|z_1|=1,|z_n|\leq\epsilon,z_2=z_3=\ldots=z_{n-1}=0\},\\ 
\partial(S^3_{\epsilon}\cap W_j)&=&\{|z_1|=1,|z_n|=\epsilon,z_2=z_3=\ldots=z_{n-1}=0\}.
\end{eqnarray*}
We claim $Supp(\beta\wedge d\beta)\subset S^3_{\epsilon}\cap W$. In fact, by construction we have 
\begin{equation}
\beta \wedge d\beta=\beta_j\wedge d\beta_j\,\,\,\,\,\,\,\text{in}\,\,\,\,\,\,S^3_{\epsilon}\cap W_j.
\end{equation}
On the other hand, since $$\beta_j\wedge d\beta_j=\phi^2\gamma'_j\wedge d \gamma'_j-\gamma'_j\wedge d\phi\wedge\gamma_j+\phi(1-\phi)\gamma'_j\wedge d\gamma_j+(1-\phi)\phi\gamma_j\wedge d\gamma'_j+(1-\phi)^2\gamma_j\wedge d\gamma_j,$$ and $\gamma'_j\wedge d\gamma'_j=\gamma_j\wedge d\gamma'_j=\gamma'_j\wedge d\gamma_j=\gamma_j\wedge d\gamma_j=0$ in $S^3_{\epsilon}\cap W_j$, we get 
$$\beta \wedge d\beta=\beta_j\wedge d\beta_j=d\phi\wedge\gamma'_j\wedge\gamma_j.$$ Therefore, $Supp(\beta\wedge d\beta)\subset S^3_{\epsilon}\cap W$ and the assertion is proved. 
\par Now 
\begin{equation}\label{index_3}
\int_{S^3_{\epsilon}}\beta\wedge d\beta=\int_{S^3_{\epsilon}\cap W}\beta\wedge d\beta=\sum^{N}_{j=1}\int_{S^3_{\epsilon}\cap W_j}\beta_j\wedge d\beta_j.
\end{equation}
Since $\beta_j\wedge d\beta_j=d\phi\wedge\gamma'_j\wedge\gamma_j$ in $S^3_{\epsilon}\cap W_j$, we obtain 
$d((1-\phi)\gamma\wedge\gamma'_j)=\beta_j\wedge d\beta_j$. Then 
\begin{eqnarray}\label{index_4}
\int_{S^3_{\epsilon}\cap W_j}\beta_j\wedge d\beta_j&=&\int_{S^3_{\epsilon}\cap W_j}d((1-\phi)\gamma\wedge\gamma'_j)\nonumber\\
&=& \int_{\partial{(S^3_{\epsilon}\cap W_j)}}(1-\phi)\gamma\wedge\gamma'_j\nonumber\\
&=&\int_{\partial{(S^3_{\epsilon}\cap W_j)}}\gamma\wedge\gamma'_{j}\nonumber\\
&=&\int_{\partial{(S^3_{\epsilon}\cap W_j)}}(\gamma_0+\gamma_1)\wedge\gamma'_{j}\nonumber\\
&=&\int_{\partial{(S^3_{\epsilon}\cap W_j)}}\gamma_0\wedge\gamma'_j.
\end{eqnarray}
In the coordinate system $(z_1,\ldots,z_n)$, we have 
$$\gamma=b_1dz_1+\ldots+b_{n-1}dz_{n-1}+b_n dz_{n}$$ where $\gamma_{1}=b_1dz_1+\ldots+b_{n-1}dz_{n-1}$ and $\gamma_0=b_ndz_n$. Furthermore, since $\F$  is a simple almost Liouvillian foliation, we have $$\gamma_0=\lambda_j\frac{dz_n}{z_n}+\gamma_{0j},$$ where $\lambda_j=\res(\gamma_0,V_j)$ and $\gamma_{0j}$ is a suitable holomorphic 1-form. 
On the other hand, 
$\gamma'_j=a_{1}dz_1+\ldots+a_ndz_n,$ with $a_j\in\mathcal{O}(W_j)$ for all $i=1,\ldots,n,$ and in particular 
$\gamma'_{j}|_{\partial{V_j}} =a_1(z_1,0,\ldots,0)dz_1$, where $\partial{V}_j=\{|z_1|=1,z_2=\ldots=z_n=0\}$.
Then 
\begin{eqnarray}\label{index_2}
\int_{\partial(S^3_{\epsilon}\cap W_{j})}\gamma_0\wedge\gamma'_{j}&=&\int_{|z_1|=1,|z_n|=\epsilon}\left(\lambda_j\frac{dz_n}{z_n}+\gamma_{0j}\right)\wedge(a_1dz_1+a_ndz_n)\nonumber\\
&=&\int_{|z_1|=1,|z_n|=\epsilon}\left(\lambda_ja_1\frac{dz_n}{z_n}\wedge dz_1\right)\nonumber\\
&=&(2\pi i)\lambda_j\int_{|z_1|=1}\left(\frac{1}{2\pi i}\int_{|z_n|=\epsilon}\frac{a_1(z_1,0,\ldots,z_n)}{z_n}dz_n\right)dz_1\nonumber\\
&=&(2\pi i)\lambda_j\int_{|z_1|=1}a_1(z_1,0,\ldots,0)dz_1\nonumber\\
&=&(2\pi i)\lambda_j\int_{\partial{V_j}}\gamma'_j|_{\partial{V_j}}.
\end{eqnarray}
If $V_j$ contains $Z$ then it follows from (\ref{index_1}) that 
$$\frac{1}{2\pi i}\int_{\partial{V_j}}\gamma'_j|_{\partial{V_j}}= \var(\F,V_j,Z).$$ Thus from (\ref{index_2}) we get 
$$\frac{1}{(2\pi i)^2}\int_{\partial(S^3_{\epsilon}\cap W_{j})}\gamma_0\wedge\gamma'_{j}= \lambda_j\var(\F,V_j,Z).$$
If $V_j$ does not contain $Z$ then $\var(\F,V_j,Z)=0$ by Remark \ref{var_zero}. 
Finally, from (\ref{index_3}) and (\ref{index_4}), we obtain
$$\bb(\F,Z)=\sum^{k}_{j=1}\res(\gamma_0,V_j) \var(\F,V_j,Z),$$
where $V_1,\ldots,V_k$ are the irreducible components of $V$ that contains $Z$.
\end{proof}
\par To end this section we give an example where Theorem \ref{almost_foliation} applies.
\begin{example}
Let $\F$ be the germ at $0\in\C^3$ of a holomorphic foliation defined by $$\omega=2yzdx+3xzdy+4xydz.$$
We have 
$$d\omega=\gamma_0\wedge\omega,\quad\text{where}\quad\gamma_0=-\frac{dx}{x}-2\frac{dy}{y}-3\frac{dz}{z}.$$
In particular, $\F$ is a codimension one Liouvillian foliation at $0\in\C^3$. Let $V=(\gamma_0)_{\infty}=\displaystyle\bigcup^3_{j=1} V_j$, where $V_1=\{x=0\}$, $V_2=\{y=0\}$ and $V_3=\{z=0\}$. Note that 
$$\res(\gamma_0,V_1)=-1,\,\,\,\,\,\,\,\,
\res(\gamma_0,V_2)=-2,\,\,\,\,\,\,\,\,\,
\res(\gamma_0,V_3)=-3.$$
Let $Z=\{y=z=0\}$, it is evident that $Z\subset\sing_2(\F)$. Furthermore $Z\subset V_2$ and $Z\subset V_3$. To compute $\var(\F,V_3,Z)$, we pick $p=(1,0,0)\in Z$ and the transverse section $\sum_p=\{|y|\leq 1,x=1,z=0\}$ to $Z$ in $V_3$. By Proposition \ref{prop-variational}, we get 
$$\var(\F,V_3,Z)=\frac{1}{2\pi i}\int_{\Gamma_3}\left(\frac{d(4xy)}{4xy}-\frac{2ydx+3xdy}{4xy}\right)|_{x=1}=\frac{1}{4},$$ where $\Gamma_3\in H_1(\sum_p\setminus\{p\},\mathbb{Z})$. 
\par On the other hand, to compute $\var(\F,V_2,Z)$, take again $p=(1,0,0)\in Z$ and the transverse section $\sum_p=\{|z|\leq 1,x=1,y=0\}$ to $Z$ in $V_2$. Again by Proposition \ref{prop-variational}, we get 
$$\var(\F,V_2,Z)=\frac{1}{2\pi i}\int_{\Gamma_2}\left(\frac{d(3xz)}{3xz}-\frac{2zdx+4xdz}{3xz}\right)|_{x=1}=-\frac{1}{3},$$ where $\Gamma_2\in H_1(\sum_p\setminus\{p\},\mathbb{Z})$.
Hence, applying Theorem \ref{almost_foliation}, we conclude
$$\bb(\F,Z)=(-2)\left(-\frac{1}{3}\right)+(-3)\left(\frac{1}{4}\right)=-\frac{1}{12}.$$
\par Using a recently result of Corr\^ea-Louren\c{c}o \cite{correa}, we can verify that the above computations are correct. In fact, as in \cite[Example 4.1]{correa}, let us consider $p=(1,0,0)\in Z$, $D=\{|(y,z)|\leq 1, x=1\}$ and 
$$\omega|_{D}=3zdy+4ydz.$$
The dual vector field of $\omega|_{D}$ is $X=4y\frac{\partial}{\partial{y}}-3z\frac{\partial}{\partial{z}}$. A straightforward calculation shows that 
$$JX(0,0)=\begin{bmatrix}
    4       & 0 \\
    0   &  -3 
   \end{bmatrix}.$$
Thus $$\bb(\F,Z)=\frac{\tr(JX(0,0))^2}{\det (JX(0,0))}=-\frac{1}{12}.$$
\end{example}

\section{Singular holomorphic foliations tangent to singular Levi-flat hypersurfaces}\label{Levi-flat}
\par Motived by \cite{andres} and \cite{projective}, we study singular codimension one holomorphic foliations tangent to singular real-analytic Levi-flat hypersurfaces in compact complex manifolds with emphasis on the type of singularities of them. 
\par Let us clarify these terms. A  closed  set $M$   of a
  complex manifold   $X$ is a \emph{real-analytic subvariety} if it is defined, in some   neighborhood of each point of $M$, by the vanishing of finitely many real-analytic functions with real values. We say that a real-analytic subvariety $M$ is \emph{irreducible} if it cannot be written as the union of two real-analytic subvarieties properly contained in it. If $M$ is irreducible, it has a well-defined dimension $\dim_{\mathbb{R}} M$. A \emph{hypervariety} is a subvariety of real codimension one.
\par If $M \subset X$ is a real-analytic submanifold of real codimension one.
For each   $p \in M$, there is a unique complex hyperplane $L_{p}$ contained in the tangent space $T_{p}M \subset T_{p}X$. This  defines a real-analytic distribution $p \mapsto L_{p}$ of complex hyperplanes in $T M$.  When this distribution is integrable in the sense of Frobenius, we say that $M$ is a {\em Levi-flat hypersurface}. In this case, $M$ is foliated by   immersed complex manifolds of dimension $n-1$. This foliation, denoted by $\mathcal{L}$, is known as  {\em Levi foliation}. A normal form for  such an object was given by
E. Cartan \cite[Theorem IV]{cartan}: at each $p \in M$, there are   holomorphic coordinates $(z_{1},\ldots,z_{n})$ in a neighborhood $U$ of $p$
such that
\begin{equation}\label{formalocal-hlf}
M \cap U = \{\im(z_{n}) = 0\}.
\end{equation}
 As a consequence, the leaves of
$\mathcal{L}$
 have local equations $z_{n} = c$, for $c \in \mathbb{R}$.
\par In the singular case,    an irreducible real-analytic   hypervariety $M \subset X$ is said to be \emph{Levi-flat} if its \emph{regular part}  is a Levi-flat hypersurface. We denote by
$M_{reg}$ its \emph{regular part} --- the points near which $M$ is a real-analytic manifold of dimension equal to
$\dim_{\R}M$. Let $\sing(M)$ be the singular points of $M$, points near which $M$ is not a real-analytic submanifold (of any dimension). Because we are working with real-analytic sets, the set $\sing(M)$ is not in general equal to the complement of $M_{reg}$ as defined above, and is only a semi-analytic set (see for instance \cite{singularlebl}).
If $M \subset X$ is a real-analytic Levi-flat hypervariety,
Cartan's local trivialization   allows the extension of the Levi foliation  to a non-singular holomorphic foliation  in a neighborhood of  $M_{reg}$ in $X$, which is unique as a germ around $M_{reg}$.
In general, it is not possible to extend    $\mathcal{L}$   to a singular holomorphic foliation in a neighborhood of $M$. There are examples of Levi-flat hypervarieties whose Levi foliations extend to singular   $k$-webs in the ambient space \cite{generic}.  However,
there is an extension  in some ``holomorphic lifting'' of $M$ (see for instance \cite{brunella}).
If   a singular holomorphic foliation $\F$ in the ambient space $X$ coincides with the     Levi foliation on $M_{reg}$, we say either that $M$ is \emph{invariant} by $\F$ or that $\F$ is \emph{tangent} to $M$.
\begin{definition}
 A singular point $p\in \sing(M)$ is called \textit{dicritical} if, for every neighborhood $U$ of $p$, infinitely many leaves of the Levi-foliation on $M^{*}\cap U$ have $p$ in their closure. 
 \end{definition}
 
\par Recently dicritical singularities of singular real-analytic Levi-flat hypersurfaces have been characterized in terms of the \textit{Segre varieties}, see for instance Pinchuk-Shafikov-Sukhov \cite{pinchuk}. 
\par We recall the definition of meromorphic and holomorphic first integral for holomorphic foliations. Let $\F$ be a singular holomorphic foliation on $X$. Recall that $\F$ admits a \textit{meromorphic} (\textit{holomorphic}) first integral at $p\in X$, if there exists a neighborhood $U$ of $p$ and  
a \textit{meromorphic} (\textit{holomorphic}) function $h$ defined in $U$ such that its indeterminacy (zeros)
set is contained in $\sing(\F)\cap U$ and its level curves contain the leaves of $\F$ in $U$.

\par To prove the main result of this section, we need the following result. 
\begin{theorem}[Cerveau-Lins Neto \cite{alcides}]\label{lins-cerveau}
 Let $\F$ be a germ of codimension one holomorphic foliation at $0\in\mathbb{C}^{n}$, $n\geq{2}$, tangent to a germ of an irreducible real-analytic hypersurface $M$. Then $\F$ has a non-constant meromorphic first integral. In the case of dimension two, we can precise more:
\begin{enumerate}
\item If $\F$ is dicritical then it has a non-constant meromorphic first integral.
\item If $\F$ is non-dicritical then it has a non-constant holomorphic first integral.
\end{enumerate}
\end{theorem}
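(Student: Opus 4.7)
The plan is to work with the \emph{Segre family} attached to $M$. Choose a germ $\phi$ of real-analytic function defining $M$, and let $\Phi(z,w)$ denote its complexification: a germ of holomorphic function on $(\C^n,0)\times(\C^n,0)$ satisfying $\Phi(z,\bar z)=\phi(z)$. For each $p$ near $0$, the \emph{Segre variety} is
\[
Q_p \;:=\; \{\, z \, : \, \Phi(z,\bar p)=0\,\},
\]
a germ of complex hypervariety depending anti-holomorphically on $p$. Using Cartan's normal form $\phi=\im(z_n)$ one verifies that, for $p\in M_{\rm reg}$, the Levi leaf through $p$ coincides with an open piece of $Q_p$. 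Since $\F$ is tangent to $M$, has codimension one, and the Levi leaf has the same complex dimension as a leaf of $\F$, the leaf of $\F$ through $p$ agrees with $Q_p$ locally; hence $Q_p$ is a leaf of $\F$.

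This produces a real-analytic family of leaves of $\F$ parametrized by $p\in M_{\rm reg}$. Two points lying on a common Levi leaf give rise to the same Segre variety, so the assignment $p\mapsto [Q_p]$ factors through a one-real-parameter family valued in the local leaf space of $\F$, which is one complex-dimensional. I would then exploit the anti-holomorphic dependence of $Q_p$ on $p$ in order to complexify this one-real-parameter family into a holomorphic one-complex-parameter family of leaves: concretely, eliminate $w$ from the system obtained by coupling $\Phi(z,w)=0$ with the condition that $d_z\Phi(z,w)$ be proportional to a local defining $1$-form of $\F$, producing a meromorphic function $h$ whose fibers are leaves of $\F$. This $h$ is the desired non-constant meromorphic first integral.

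For the refinement when $n=2$, write $h=f/g$ with $f,g$ coprime holomorphic germs, so that the pencil $\{\alpha f+\beta g=0\}_{[\alpha:\beta]\in\Proj^1}$ exhausts the leaves of $\F$ and has base locus $\{f=g=0\}\subset\sing(\F)$. If $\F$ is dicritical at $0$, infinitely many distinct separatrices accumulate at $0$; since they all sit in the pencil, the base locus must contain $0$, forcing $h$ to be genuinely meromorphic. If $\F$ is non-dicritical, only finitely many separatrices pass through $0$, so the pencil cannot have $0$ as a base point (else infinitely many members of the pencil, hence separatrices, would pass through $0$); thus one of $f,g$ is a unit and $h$ reduces to a non-constant holomorphic first integral.

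The hardest step is the passage from the real-analytic Segre family of leaves to a genuine complex-analytic one-parameter family, that is, the actual construction of $h$. The real family is already in hand, but to extract a meromorphic first integral one must rigorously use the fact that $\F$ is complex while the Segre map $p\mapsto Q_p$ is anti-holomorphic in $p$, so that the closure of the Segre data in the complex leaf space of $\F$ defines an honest meromorphic function rather than a merely real-analytic invariant. This is the analytic core of the Cerveau--Lins Neto theorem and is where the proof requires either careful elimination theory applied to $\Phi(z,w)$ together with the integrability condition for $\F$, or the Segre-correspondence machinery developed in the context of Levi-flat extension problems.
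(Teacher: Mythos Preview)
The paper does not contain a proof of this statement: Theorem~\ref{lins-cerveau} is quoted from Cerveau--Lins Neto \cite{alcides} and used as a black box in the proofs of Proposition~\ref{baum-bott_signo} and Theorem~\ref{Levi}. There is therefore nothing in the present paper to compare your proposal against.

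Regarding the proposal itself, your outline follows the genuine strategy of \cite{alcides}: exploit the Segre varieties $Q_p$ attached to the complexification of a defining function of $M$, identify them with leaves of $\F$ along $M_{\rm reg}$, and then promote the resulting real-analytic family of leaves to a meromorphic first integral. Your argument for the two-dimensional refinement (dicritical versus non-dicritical via the base locus of the pencil $\alpha f+\beta g$) is correct. However, you yourself identify the gap: the passage from the real family $\{Q_p\}$ to an actual meromorphic function $h$ is only asserted, not carried out. In \cite{alcides} this step is substantial; it is not a routine elimination but requires controlling the structure of the complexified foliation on $(\C^n,0)\times(\C^n,0)$ and invoking results on the algebraicity of leaves in a family. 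As written, your proposal is an accurate roadmap rather than a proof, and the crucial construction of $h$ remains to be supplied.
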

\par We now prove a generalization of \cite[Lemma 3.2]{andres}.
To prove this we use Theorem \ref{almost_foliation} and Theorem \ref{lins-cerveau}. 
\begin{proposition}\label{baum-bott_signo}
Let $\F$ be a germ of a codimension one holomorphic foliation at $0\in\mathbb{C}^n$, $n\geq 3$. Suppose that $\sing_2(\F)\neq\emptyset$ and that $\F$ has a non-constant holomorphic first integral, then for every irreducible component $Z$ of $\sing_2(\F)$, we have 
$$\bb(\F,Z)\leq 0.$$
\begin{proof}
Let $g=g_1^{m_1}g_2^{m_2}\ldots g_k^{m_{k}}$ be a germ at $0\in\C^n$ of a holomorphic first integral for $\F$, where $g_1,\ldots,g_k$ are irreducible germs at $0\in\C^n$ and $m_1,\ldots,m_k$ are non-negative integers. Then the germ 
$\omega=m_1g_2\ldots g_{k}dg_1+\ldots+m_kg_1\ldots g_{k-1}dg_k$ at $0\in\C^n$ defines $\F$. Since $dg=h\omega$ with $h=g_1^{m_1-1}g_2^{m_2-1}\ldots g_{k}^{m_k-1}$ we get
\begin{eqnarray}
d\omega=-\frac{dh}{h}\wedge \omega,
\end{eqnarray}
where $\frac{dh}{h}=(m_1-1)\frac{dg_1}{g_1}+\ldots+(m_k-1)\frac{dg_k}{g_k}$. In particular, $\F$ is a simple Liouvillian foliation at $0\in\C^n$. 
\par Let $Z$ be an irreducible component of $\sing_2(\F)$, and let $V_j=\{g_j=0\}$. Note that $Z\subset V_j$ for all $1\leq j\leq k$. To compute $\bb(\F,Z)$, we need to compute $\var(\F,V_j,Z)$. By Proposition \ref{prop-variational} we get
\begin{eqnarray*} 
\var(\F,V_j,Z)&=&\frac{1}{2\pi i}\int_{\partial{V}_j}\left[\sum^{k}_{\ell\neq j}\frac{dg_{\ell}}{g_{\ell}}-\sum^{k}_{\ell\neq j}\frac{m_{\ell}}{m_{j}}\frac{dg_{\ell}}{g_{\ell}}\right]\\
&=&\frac{1}{2\pi i}\int_{\partial{V}_j}\sum^{k}_{\ell\neq j}\left(1-\frac{m_{\ell}}{m_j}\right)\frac{dg_{\ell}}{g_{\ell}}\\
&=&\sum^{k}_{\ell\neq j}\left(1-\frac{m_{\ell}}{m_j}\right)\ord_Z(g_{\ell}|_{V_j}).
\end{eqnarray*}
On the other hand, $\res\left(-\frac{dh}{h},V_j\right)=-(m_j-1)=1-m_j$. According to Theorem \ref{almost_foliation}, we obtain 
\begin{eqnarray*}
\bb(\F,Z)&=&\sum^{k}_{j=1}\res\left(-\frac{dh}{h},V_j\right)\var(\F,V_j,Z)\\
&=& \sum^{k}_{j=1}(1-m_j)\left[\sum^{k}_{\ell\neq j}\left(1-\frac{m_{\ell}}{m_j}\right)\ord_{Z}(g_{\ell}|_{V_j})\right].
\end{eqnarray*}
Since $\ord_{Z}(g_{\ell}|_{V_{j}})=\ord_Z(g_{j}|_{V_{\ell}})$ for $\ell\neq j$, we get  
\begin{eqnarray}
\bb(\F,Z)=-\sum_{1\leq \ell< j\leq k}\frac{(m_{\ell}-m_j)^2}{m_{\ell}m_{j}}\ord_Z(g_{j}|_{V_{\ell}})\leq 0.
\end{eqnarray}
\end{proof}
\end{proposition}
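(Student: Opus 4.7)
My strategy is to realize $\F$ as a simple almost Liouvillian foliation whose defining data are packaged in the holomorphic first integral, and then feed this into Theorem \ref{almost_foliation}. Writing $g = g_1^{m_1}\cdots g_k^{m_k}$ with the $g_j$ distinct irreducible germs at $0$ and $m_j\geq 1$, I take the defining form $\omega = dg/h$ with $h = g_1^{m_1-1}\cdots g_k^{m_k-1}$. Differentiating $dg = h\omega$ gives at once $d\omega = -\frac{dh}{h}\wedge\omega$, and $-\frac{dh}{h} = -\sum_j (m_j-1)\,\frac{dg_j}{g_j}$ is a closed meromorphic $1$-form whose poles are simple and supported on the invariant divisor $V = \bigcup_j V_j$, $V_j = \{g_j = 0\}$. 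Thus $\F$ is simple almost Liouvillian with $\gamma_0 = -dh/h$, $\gamma_1 = 0$, and $\res(\gamma_0, V_j) = 1-m_j$.

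Next I will compute the variational indices $\var(\F, V_j, Z)$. The natural identity
$$\omega = \Bigl(m_j\prod_{\ell\neq j} g_\ell\Bigr) dg_j + g_j\,\eta_j, \qquad \eta_j := \sum_{\ell\neq j} m_\ell\,\prod_{s\neq \ell, j} g_s\,dg_\ell,$$
is a Saito decomposition of $\omega$ relative to $V_j$ with $g_{\text{Saito}} = 1$ and $h_{\text{Saito}} = m_j\prod_{\ell\neq j}g_\ell$, both trivially coprime to $g_j$ since the $g_\ell$ are distinct irreducibles. Substituting into Proposition \ref{prop-variational}, the integrand collapses to $\sum_{\ell\neq j}\bigl(1 - m_\ell/m_j\bigr)\,dg_\ell/g_\ell$, and integrating over a small loop $\Gamma$ around $Z$ on a transverse disc inside $V_j$ yields
$$\var(\F, V_j, Z) = \sum_{\ell\neq j}\Bigl(1 - \frac{m_\ell}{m_j}\Bigr)\ord_Z(g_\ell|_{V_j}),$$
since each $\frac{1}{2\pi i}\int_\Gamma dg_\ell/g_\ell$ detects the order of vanishing of $g_\ell|_{V_j}$ along $Z$.

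Applying Theorem \ref{almost_foliation}, and harmlessly extending the sum over all $j$ thanks to Remark \ref{var_zero} (those $V_j$ not containing $Z$ contribute zero), I get
$$\bb(\F, Z) = \sum_{j=1}^{k}(1-m_j)\sum_{\ell\neq j}\Bigl(1 - \frac{m_\ell}{m_j}\Bigr)\ord_Z(g_\ell|_{V_j}).$$
The closing move is a symmetrization based on $\ord_Z(g_\ell|_{V_j}) = \ord_Z(g_j|_{V_\ell})$, both being the local intersection multiplicity of the hypervarieties $V_\ell$ and $V_j$ along $Z$. Pairing the $(j,\ell)$ and $(\ell, j)$ contributions and simplifying, the cross terms collapse to $-(m_\ell - m_j)^2/(m_\ell m_j)$ times the common intersection multiplicity, giving
$$\bb(\F, Z) = -\sum_{1\leq \ell < j\leq k}\frac{(m_\ell - m_j)^2}{m_\ell m_j}\ord_Z(g_j|_{V_\ell}) \leq 0.$$

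The main obstacle is conceptual rather than computational: the individual summands in the double sum can take either sign, so non-positivity is not visible term-by-term and only emerges after combining the two orderings of each pair $\{\ell,j\}$ into the perfect-square expression. Once one anticipates this symmetrization, the rest is a routine manipulation of Saito's formula together with the simple-pole residues built into the first integral.
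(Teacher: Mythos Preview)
Your argument is correct and follows essentially the same route as the paper: factor the first integral, write $d\omega=-\frac{dh}{h}\wedge\omega$ to exhibit $\F$ as simple almost Liouvillian, compute each $\var(\F,V_j,Z)$ via the Saito decomposition and Proposition~\ref{prop-variational}, feed into Theorem~\ref{almost_foliation}, and symmetrize using $\ord_Z(g_\ell|_{V_j})=\ord_Z(g_j|_{V_\ell})$. Your handling of the components $V_j$ not containing $Z$ through Remark~\ref{var_zero} is in fact a bit more careful than the paper, which simply asserts $Z\subset V_j$ for all $j$.
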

\par Now we state the main result of this section.
\begin{theorem}\label{Levi}
Let $\F$ be a codimension one singular holomorphic foliation on a compact complex manifold $X$ of dimension at least three, tangent to an irreducible real-analytic Levi-flat hypervariety $M\subset X$. Suppose that:
\begin{enumerate}
\item $Sing_2(\F)$ is not empty and $\sing_2(\F)\subset M$,
\item $h^{4}(X,\mathbb{C})=1$ and denote by $\zeta$ the generator of $H^4(X,\C)$,
\item for every fundamental class $[W]\in H^4(X,\C)$ of an irreducible complex subvariety $W\subset X$ of codimension two, there exists $\alpha>0$ such that $[W]=\alpha\zeta$,
\item for the Chern class $c_1^2(N_\F)\in H^4(X,\C)$, there exists $\alpha_0>0$ such that $$c_1^2(N_\F)=\alpha_0\zeta.$$
\end{enumerate}
Then there exists an irreducible component $Z$ of $\sing_2(\F)$ such that it contains some dicritical point $p\in\sing(M)$. Moreover, $\F$ has a non-constant meromorphic first integral at $p$.
\end{theorem}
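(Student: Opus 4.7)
The plan is to combine Theorem \ref{baum-bott}, Proposition \ref{baum-bott_signo}, and Theorem \ref{lins-cerveau}.

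First I apply the Baum--Bott decomposition $\sum_{Z} \bb(\F,Z)[Z] = c_1^2(N_\F)$, the sum running over the irreducible components $Z$ of $\sing_2(\F)$. Using hypotheses (2), (3), (4), write $[Z] = \alpha_Z \zeta$ with $\alpha_Z > 0$ and $c_1^2(N_\F) = \alpha_0 \zeta$ with $\alpha_0 > 0$. Comparing coefficients of $\zeta$ in $H^4(X,\C)$ yields $\sum_{Z} \bb(\F,Z)\alpha_Z = \alpha_0 > 0$, so at least one irreducible component $Z_0$ satisfies $\bb(\F,Z_0) > 0$. Since $\F$ restricts to the nonsingular Levi foliation on $M_{reg}$, hypothesis (1) gives $Z_0 \subset \sing(\F)\cap M \subset \sing(M)$.

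Next, pick a generic point $p \in Z_0$, meaning a smooth point of $Z_0$ disjoint from the other irreducible components of $\sing_2(\F)$. Theorem \ref{lins-cerveau} applied to the germ of $\F$ at $p$ (tangent to the germ of $M$) produces a non-constant meromorphic first integral $F = g/h$ with $g,h$ coprime holomorphic germs; this already delivers the ``moreover'' clause. I claim that $p$ lies in the indeterminacy locus $I := \{g = h = 0\}$. Indeed, if $h(p) \neq 0$ then $F$ itself is a non-constant holomorphic first integral of $\F$ at $p$, while if $g(p) \neq 0$ then $1/F = h/g$ is. In either case Proposition \ref{baum-bott_signo} forces $\bb(\F,Z_0) \leq 0$, contradicting the first step. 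Hence $g(p) = h(p) = 0$.

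Because $g$ and $h$ are coprime, $I$ has pure codimension two and is contained in $\sing(\F)$; since at the generic point $p$ the only codimension-two singular component present is $Z_0$, the germs of $I$ and $Z_0$ at $p$ coincide. Consequently, every level set $\{g = c h\}$, $c \in \mathbb{P}^1$, contains $p$ in its closure, producing infinitely many separatrices of $\F$ through $p$. Finally, because the Levi foliation of $M$ coincides with $\F$ on $M_{reg}$, the germ of $M$ at $p$ is a union of leaves of $\F$, and a real-dimension count forces $M = F^{-1}(\Gamma)$ for a real-analytic curve $\Gamma \subset \mathbb{P}^1$. As $\Gamma$ is infinite and each corresponding leaf $F^{-1}(c)$ has $p$ in its closure, $p$ is a dicritical singularity of $M$, as required.

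The main technical obstacle I anticipate is the identification $I = Z_0$ near $p$, which relies on the coprimality of $g$ and $h$ and the genericity of $p$ (so that no other codimension-two component intervenes). A subsidiary point is the description $M = F^{-1}(\Gamma)$, which encodes the fact that the Levi foliation provides a real one-parameter subfamily of the pencil of leaves of $\F$ parametrized by the meromorphic first integral.
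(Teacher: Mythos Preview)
Your strategy matches the paper's: combine Theorem~\ref{baum-bott}, Proposition~\ref{baum-bott_signo}, and Theorem~\ref{lins-cerveau}. The paper phrases it as a contradiction (assume every component carries only non-dicritical points, deduce a holomorphic first integral at each generic point, hence $\bb\leq 0$ everywhere, contradicting hypothesis~(4)), whereas you argue directly (find $Z_0$ with $\bb(\F,Z_0)>0$ and show a generic point of $Z_0$ is dicritical). These are contrapositives of one another.

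Two points deserve attention. First, the inclusion $\sing(\F)\cap M\subset\sing(M)$ that you invoke is not valid in general: if locally $\omega=f\,dz_n$ with $\{f=0\}$ of codimension two meeting $M=\{\operatorname{Im}(z_n)=0\}$, then $\sing(\F)$ meets $M_{reg}$. What you actually need is that your generic $p\in Z_0$ lies in $\sing(M)$, and this \emph{does} follow from your own argument: if $p\in M_{reg}$, Cartan's local form gives the holomorphic first integral $z_n$, and Proposition~\ref{baum-bott_signo} would force $\bb(\F,Z_0)\leq 0$, a contradiction. Second, the implication ``indeterminacy at $p$ $\Rightarrow$ $p$ dicritical for $M$'' is precisely the step the paper asserts without detail (in the contrapositive form ``non-dicritical $\Rightarrow$ holomorphic first integral''). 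Your sketch via $M=F^{-1}(\Gamma)$ is on the right track but imprecise: one only has $F(M_{reg})\subset\Gamma$, and you still need to check that infinitely many \emph{Levi leaves} (not merely separatrices of $\F$) have $p$ in their closure. The dimension count you allude to---$M$ has real dimension $2n-1$, so near $p$ it cannot be contained in finitely many complex level sets of $F$---does force $\Gamma$ to be one-real-dimensional and, combined with the fact that every level hypersurface $\{g=ch\}$ passes through $I\supset\{p\}$, yields the conclusion; but this deserves to be spelled out.
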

\begin{proof}

Suppose by contradiction that $\sing_2(\F)$ consists of irreducible components with only non-dicritical singularities of $M$. Take an irreducible component  $Z$ of $\sing_2(\F)$ and a generic point $q\in Z$. By hypothesis (1) we have $Z\subset M$.
Let $U$ be a small neighborhood of $q$ in $X$  such that $\F$ is represented by a holomorphic 1-form $\omega$ on $U$ and $Z\cap U$ is the unique singular component of $\omega$. 
 Then, since $\F$ and $M$ are tangent in $U$ we have $\F|_{U}$ admits a meromorphic first integral $g$ on $U$, by Theorem \ref{lins-cerveau}. But since $q\in U$ is a non-dicritical singularity, $g$ must be a holomorphic first integral.
 \par Applying Proposition \ref{baum-bott_signo} to $\F|_U$, we get $\bb(\F,Z)\leq 0$, for any $Z\subset\sing_2(\F)$. Assume that $\sing_2(\F)=\displaystyle\bigcup^{k}_{j=1}Z_j$. Then Baum-Bott's formula (cf. Theorem \ref{baum-bott}) implies that
\begin{eqnarray*}
c^{2}_{1}(N_{\mc{F}})&=&\sum^{k}_{j=1}\bb(\F,Z_j)[Z_j],\quad\text{in}\quad H^{4}(X,\C)\\
&=&\left(\sum^{k}_{j=1}\bb(\F,Z_j)\alpha_j\right)\zeta,\quad\text{for some}\quad\alpha_j>0
\end{eqnarray*}
which is absurd with hypothesis $(4)$, because $\alpha_0=\displaystyle\sum^{k}_{j=1}\bb(\F,Z_j)\alpha_j\leq 0$. Therefore, there exists an irreducible component $Z$ of $\sing_2(\F)$ such that it contains some dicritical point $p\in M$. Applying again Theorem \ref{lins-cerveau}, we obtain a non-constant meromorphic first integral for $\F$ in a neighborhood of $p$.
\end{proof}
\par When $X=\mathbb{P}^n$, the complex projective space, $n\geq 3$, we recall the singular set of any codimension one holomorphic foliation on $\mathbb{P}^n$, $n\geq 3$, has an irreducible component of codimenion two, see for instance \cite[Proposition 2.6, page 95]{jouanolou}.
\par  Let $h$ be the hyperplane class in $\mathbb{P}^n$. Then 
$H^4(\mathbb{P}^n,\mathbb{C})$ is generated by $h^2$. Thus for every codimension two irreducible component $W$ in $\mathbb{P}^n$ we have $[W]=\deg(W)h^2$. Moreover, for a codimension one foliation $\F$ on $\mathbb{P}^n$, $n\geq 3$, of degree $d\geq 0$, we have
$$c^2_1(N_{\F})=(d+2)^2h^2.$$ This implies that the hypotheses (2), (3) and (4) of Theorem \ref{Levi} are satisfied for codimension one foliations of $\mathbb{P}^n$. Hence, we can state the following corollary.
\begin{corollary}
Let $\F$ be a codimension one singular holomorphic foliation on $\mathbb{P}^n$, $n\geq 3$, tangent to an irreducible real-analytic Levi-flat hypervariety $M\subset \mathbb{P}^n$. Suppose that $\sing_2(\F)\subset M$. Then there exists an irreducible component $Z$ of $\sing_2(\F)$ such that it contains some dicritical point $p\in\sing(M)$. Moreover, $\F$ has a non-constant meromorphic first integral at $p$.
\end{corollary}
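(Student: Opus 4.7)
The plan is to deduce this corollary directly from Theorem \ref{Levi} by verifying that all four of its hypotheses are satisfied when $X = \mathbb{P}^n$. The hypotheses $\sing_2(\F)\subset M$ and tangency to $M$ are already built into the statement, so the only substantive work is to check condition (1) (non-emptiness of $\sing_2(\F)$) together with the three cohomological conditions (2), (3), (4).

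For condition (1), I would invoke the classical result of Jouanolou \cite{jouanolou}, cited in the paragraph immediately preceding the corollary, asserting that for $n\geq 3$ every singular codimension one holomorphic foliation on $\mathbb{P}^n$ has at least one irreducible component of $\sing(\F)$ of codimension exactly two. Combined with the assumption $\sing_2(\F)\subset M$, this gives hypothesis (1).

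For the cohomological hypotheses (2)--(4), I would use the standard fact that $H^\ast(\mathbb{P}^n,\mathbb{C})=\mathbb{C}[h]/(h^{n+1})$, where $h$ denotes the hyperplane class. In particular $H^4(\mathbb{P}^n,\mathbb{C})$ is one-dimensional, generated by $\zeta:=h^2$, which gives (2). For (3), any irreducible codimension two subvariety $W\subset\mathbb{P}^n$ satisfies $[W]=\deg(W)\,h^2$ with $\deg(W)>0$, so we may take $\alpha=\deg(W)$. For (4), it is well known that if $\F$ has degree $d\geq 0$ then its normal bundle is $N_\F=\mathcal{O}_{\mathbb{P}^n}(d+2)$, hence
\[
c_1^2(N_\F) = (d+2)^2\, h^2 = \alpha_0\,\zeta, \qquad \alpha_0 := (d+2)^2 > 0,
\]
yielding (4).

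With (1)--(4) in hand, Theorem \ref{Levi} applies verbatim and delivers an irreducible component $Z\subset\sing_2(\F)$ containing a dicritical singular point $p\in\sing(M)$, together with a non-constant meromorphic first integral for $\F$ at $p$ via Theorem \ref{lins-cerveau}. I anticipate no real obstacle in this argument: the corollary is a genuine specialization of Theorem \ref{Levi}, and the cohomological verifications for $\mathbb{P}^n$ are classical. The only mild subtlety is to notice that in $\mathbb{P}^n$ the positivity requirements in (3) and (4) reduce to the strict positivity of the numerical quantities $\deg(W)$ and $(d+2)^2$, both of which are automatic.
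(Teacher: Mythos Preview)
Your proposal is correct and follows essentially the same approach as the paper: the authors verify hypothesis (1) via Jouanolou's result that $\sing_2(\F)\neq\emptyset$ for codimension one foliations on $\mathbb{P}^n$ with $n\geq 3$, then check (2)--(4) using $\zeta=h^2$, $[W]=\deg(W)h^2$, and $c_1^2(N_\F)=(d+2)^2 h^2$, exactly as you do. The corollary then follows directly from Theorem~\ref{Levi}.
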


\end{document}